\newcommand{\rhocrit}{\rho^{\text{crit}}}
\newcommand{\rhomax}{\rho^{\text{max}}}
\newcommand{\zmax}{z^{\text{max}}}
\newcommand{\nidcs}{{[}n{]}}
\newcommand{\midcs}{{[}m{]}}
\newcommand{\mprimeidcs}{{[}m'{]}}
\newcommand{\barx}{\bar{x}}
\newcommand{\bary}{\bar{y}}
\newcommand{\bx}{\boldsymbol{x}}
\newcommand{\by}{\boldsymbol{y}}
\newcommand{\bs}{\boldsymbol{s}}
\newcommand{\meq}{{m_\mathrm{EQ}}}
\newcommand{\meqidcs}{{{[}m_\mathrm{EQ}{]}}}
\newcommand{\mstareq}{{m^{*}_\mathrm{EQ}}}
\newcommand{\mstareqidcs}{{{[}m^{*}_\text{EQ}}{]}}
\newcommand{\mall}{{m_{\mathrm{ALL}}}}
\newcommand{\mstarall}{{m^{*}_{\mathrm{ALL}}}}
\newcommand{\mallidcs}{{{[}m_{\mathrm{ALL}}{]}}}
\newcommand{\altruismArg}{\kappa}
\newcommand{\altruismUnif}{\kappa_0}
\newcommand{\altruismFn}{\varphi}
\newcommand{\altruismLevSet}{K}
\newcommand{\eqDelay}{\hat{\ell}_0}
\newcommand{\argmin}{\operatornamewithlimits{arg\,min}}
\newcommand{\argmax}{\operatornamewithlimits{arg\,max}}
\newcommand{\ignore}[1]{}
\newcommand{\smallspacingsection}[1]{%
	\vspace*{-12px}\section{#1}\vspace*{-8px}%
}
\begin{document}
\mainmatter              
\title{Altruistic Autonomy: \\Beating Congestion on Shared Roads}
\titlerunning{ }
\author{Erdem B\i y\i k${}^\star$\inst{1}, Daniel A. Lazar${}^\star$\inst{2},
Ramtin Pedarsani\inst{2}, Dorsa Sadigh\inst{3} }
\authorrunning{ }
%
\tocauthor{Erdem B\i y\i k, Daniel A. Lazar, Ramtin Pedarsani, Dorsa Sadigh}
\institute{Electrical Engineering, Stanford University,
\and
Electrical and Computer Engineering, University of California, Santa Barbara,
\and
Computer Science \& Electrical Engineering, Stanford University. \\
\email{ebiyik@stanford.edu, \{dlazar, ramtin\}@ece.ucsb.edu, dorsa@cs.stanford.edu}
\footnotetext{${}^\star$ First two authors contributed equally and are listed in alphabetical order.} 
}
\maketitle              

\begin{abstract}
Traffic congestion has large economic and social costs. The introduction of autonomous vehicles can potentially reduce this congestion, both by increasing network throughput and by enabling a social planner to incentivize users of autonomous vehicles to take longer routes that can alleviate congestion on more direct roads. We formalize the effects of \emph{altruistic autonomy} on roads shared between human drivers and autonomous vehicles.
In this work, we develop a formal model of road congestion on shared roads based on the fundamental diagram of traffic. We consider a network of parallel roads and provide algorithms that compute optimal equilibria that are robust to additional unforeseen demand. We further plan for optimal routings when users have varying degrees of altruism. We find that even with arbitrarily small altruism, total latency can be unboundedly better than without altruism, and that the best selfish equilibrium can be unboundedly better than the worst selfish equilibrium. We validate our theoretical results through microscopic traffic simulations and show average latency decrease of a factor of 4 from worst-case selfish equilibrium to the optimal equilibrium when autonomous vehicles are altruistic.
\keywords{Autonomy-Enabled Transportation Systems, Multi-robot systems, Stackelberg Routing Game, Optimization}
\end{abstract}

\smallspacingsection{Introduction}
Autonomous and connected vehicles are soon becoming a significant part of roads normally used by human drivers. Such vehicles hold the promise of safer streets, better fuel efficiency, more flexibility in tailoring to specific drivers' needs, and time savings. In addition to benefits impacting individual users, autonomous cars can significantly influence the flow of traffic networks. A coalition of autonomous cars can potentially decrease road congestion through a number of creative techniques. Autonomous vehicles can increase road capacity and throughput, as well as decrease fuel consumption, by forming platoons and maintaining shorter headways to the vehicles they follow~\cite{askari2017effect, lioris2017platoons, motie2016throughput, adler2016optimal}. They can smooth shockwaves and instabilities that form in congested flows of human-driven vehicles~\cite{wu2017emergent, wu2018stabilizing, cui2017stabilizing, stern2018dissipation}. Finally, autonomous vehicles can incentivize people to take \emph{unselfish} routes, i.e., routes leading to a lower overall traffic delay for all drivers on the network. 
Previous work has studied methods such as tolling and incentives to influence drivers' routing choices by, e.g., shifting their commute times or incentivizing them to use less congested roads or less congesting modes of transportation~\cite{merugu2009incentive, pluntke2013insinc, nie2013managing}.
Our key insight is how people, who are incentivized to behave altruistically, along with vehicles with autonomous capabilities, can be leveraged to positively influence traffic networks.
Our key contribution is to formally study and plan for \emph{altruistic} autonomous cars that can influence traffic flows in order to achieve lower latencies. The setting we are interested in is as follows: assume there is a population of selfish human drivers and a population of autonomous car users, some of whom are willing to take routes with varying degrees of inefficiency. \emph{How can we best utilize this altruistic autonomy to minimize average user delay?}

To answer this, we first develop a model of vehicle flow that reflects changes in road capacity due to efficient platooning of autonomous cars on shared roads. The latency experienced by users of a road is a function of the total vehicle flow as well as the \emph{autonomy level}, the fraction of vehicles that are autonomous. We use the terminology \emph{latency} to refer to the delay experienced by a driver from taking a road, \emph{flow} to refer to the vehicle flux on a road (vehicles per second), and \emph{density} to describe the number of cars per meter on a road. Our mixed-autonomy model is developed based on the commonly used fundamental diagram of traffic~\cite{DAGANZO1994269}, which relates vehicle density to vehicle flow on roads with only human-driven cars.

Using our mixed-autonomy traffic flow model for a network of parallel roads, we study selfish equilibria in which human-drivers and users of autonomous vehicles all pick their routes to minimize their individual delays. As it may be difficult to exactly gauge flow demands, we define a notion of \emph{robustness} that quantifies how resilient an equilibrium is to additional unforeseen demand. We establish properties of these equilibria and provide a polynomial-time algorithm for finding a robust equilibrium that minimizes overall delay.

We then address how to best use autonomous vehicles that are altruistic. We develop the notion of an \emph{altruism profile}, which reflects the varying degrees to which users of autonomous vehicles are willing to take routes longer than the quickest route. We establish properties of optimal altruistic equilibria and provide a polynomial-time algorithm for finding such routings. 

We validate our theoretical results through experiments in a microscopic traffic simulator. We show that an optimal selfish equilibrium can improve latency by a factor of two, and utilizing altruism can improve latency by another factor of two in a realistic driving scenario.
Our contributions in this work are as follows:
\begin{itemize}[nosep]
\item We develop a formal model of traffic flow on mixed-autonomy roads.
\item We design an optimization-based algorithm to find a best-case Nash equilibrium.
\item We define a robustness measure for the ability to allocate additional flow demand and develop an algorithm to optimize for this measure.
\item We develop an algorithm to minimize total experienced latency in the presence of altruistic autonomous vehicles.\footnote{Due to space constraints, we defer all proofs to the supplementary materials.}
\end{itemize}

Our development considers how to find optimal equilibria. This does not directly inform a planner on how to achieve such equilibria, but is the first step towards doing so. 

\noindent\textbf{Related work.} 
Our work is closely related to work in optimal routing, traffic equilibria, and Stackelberg strategies in routing games. In routing games, populations of users travel from source graph nodes to destination nodes via edges, and the cost incurred due to traveling on an edge is a function of the volume of traffic flow on that edge. Some works have found optimal routing strategies for these games~\cite{hearn1984convex, dafermos1972multiclass_user}. If all users choose their routes selfishly and minimize their own travel time this results in a \emph{Nash} or \emph{Wardrop Equilibrium} \cite{florian1995network, dafermos1980variationalinequalities}. Stackelberg routing games are games in which a leader controls some fraction of traffic, and the remainder of the traffic reaches a selfish equilibrium \cite{roughgarden2004stackelberg, bonifaci2010stackelberg, swamy2012effectiveness}. Many works bound the inefficiency due to selfish routing in these games including in the presence of traffic composed of multiple vehicle types \cite{roughgarden2002bad, correa2008geometric, perakis2007price, lazar2018poa}. However, these works on routing games consider a model of road latency in which latency increases as vehicle flow increases. This latency model does not capture congestion, in which jammed traffic has low flow \emph{and} high latency.

While not in the routing game setting, other works focus specifically on characterizing these congestion effects on traffic flow with multiple classes of users \cite{daganzo1997continuum,zhang2002kinematic, logghe2003heterogeneous, fan2015heterogeneous}. Krichene et al. combine these two fields by considering optimal Stackelberg strategies on networks of parallel roads with latency functions that model this notion of congestion~\cite{krichene2017stackelberg}. Their work considers a single traffic type (no autonomous vehicles) and the controlled traffic can be made to take routes with delay arbitrarily worse than the best available route. In contrast, we consider a routing game with two types of traffic and there is a distribution of the altruism level of the controlled autonomous traffic.

\smallspacingsection{Model and Problem Statement}
\noindent\textbf{Traffic Flow on Pure Roads.} We assume every road has a maximum flow $\zmax_i$. This occurs when traffic is in \emph{free-flow}-- when all vehicles travel at the nominal road speed. 

\emph{Traffic density vs traffic flow:} Adding more cars to a road that is already at maximum flow makes the traffic switch from free-flow to a congested regime, which decreases the vehicle flow. In the extreme case, at a certain density, cars are bumper-to-bumper and vehicle flow stops. Removing vehicles also decreases the flow since it is not going to speed up cars that are already traveling in free-flow at their nominal speed.
Fig.~\ref{fig:single_fund_diag_and_latency}(a)-- Fundamental Diagram of Traffic~\cite{DAGANZO1994269} -- illustrates this phenomenon, where flow increases linearly with respect to density with slope $v^f_i$ until it hits the critical density. The slope corresponds to the free-flow velocity on road $i$. At the critical point, flow decreases linearly until it is zero at the maximum density. This linear model matches our simulations (Section \ref{sct:simulations}).

\begin{figure}
	\centering
	\vspace*{-15px}
	\includegraphics[width=\linewidth]{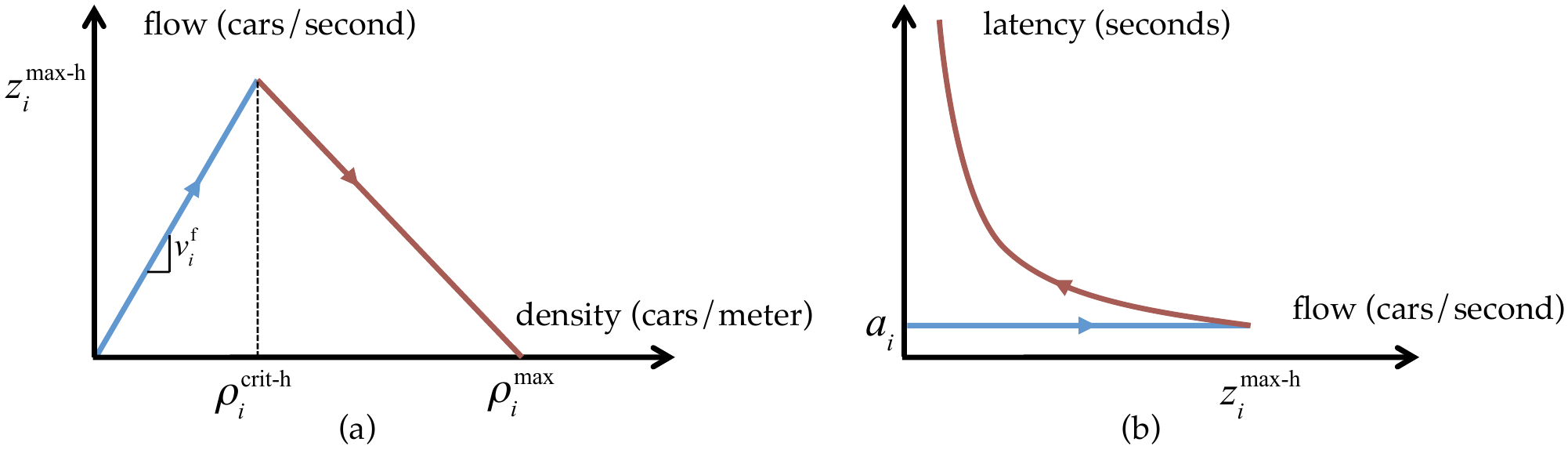}
	\vspace*{-18px}
	\caption{(a) Fundamental diagram of traffic for traffic of a pure road, relating vehicle density and flow. (b) Relationship between vehicle flow and latency. Blue and brown lines correspond to free-flow and congested regimes; arrowheads show increasing density.}
	\vspace*{-18px}
	\label{fig:single_fund_diag_and_latency}
\end{figure}

\emph{Traffic flow vs road latency: }The relationship between vehicle flow and road latency reflects the same free-flow/congested divide. In free-flow the road has constant latency, as all vehicles are traveling at the nominal velocity. In the congested regime, however, vehicle flow decreases as latency increases -- in congestion, a high density of vehicles is required to achieve a low traffic flow, resulting in high latency. This is represented in Fig.~\ref{fig:single_fund_diag_and_latency}~(b), where in free-flow, latency is constant at free-flow latency $a_i$ for any amount of flow up to the maximum flow on a road. The brown curve above corresponds to the congested regime, in which latency \emph{increases} as vehicle flow decreases.\footnote{Though congested flow may be unstable, it can be stabilized with a small number of autonomous vehicles \cite{wu2018stabilizing}, adding an additional constraint to later optimizations.}

\noindent\textbf{Traffic Flow on Mixed-Autonomy Roads.} We assume that on mixed-\linebreak autonomy roads, the autonomous vehicles can coordinate with one another and potentially form platoons to help with the efficiency of the road network.
We now extend the traffic model on pure roads to mixed-autonomy settings as shown in Fig.~\ref{fig:mixed_fund_diag_and_laten}~(a).
Assuming that neither the nominal velocity nor the maximum density changes, the critical density at which traffic becomes congested will now shift and increase with \emph{autonomy level} of the road. This can be explained as autonomous vehicles do not require a headway as large as that needed by human drivers due to platooning benefits.

\begin{figure}
	\centering
	\vspace*{-15px}
	\includegraphics[width=\linewidth]{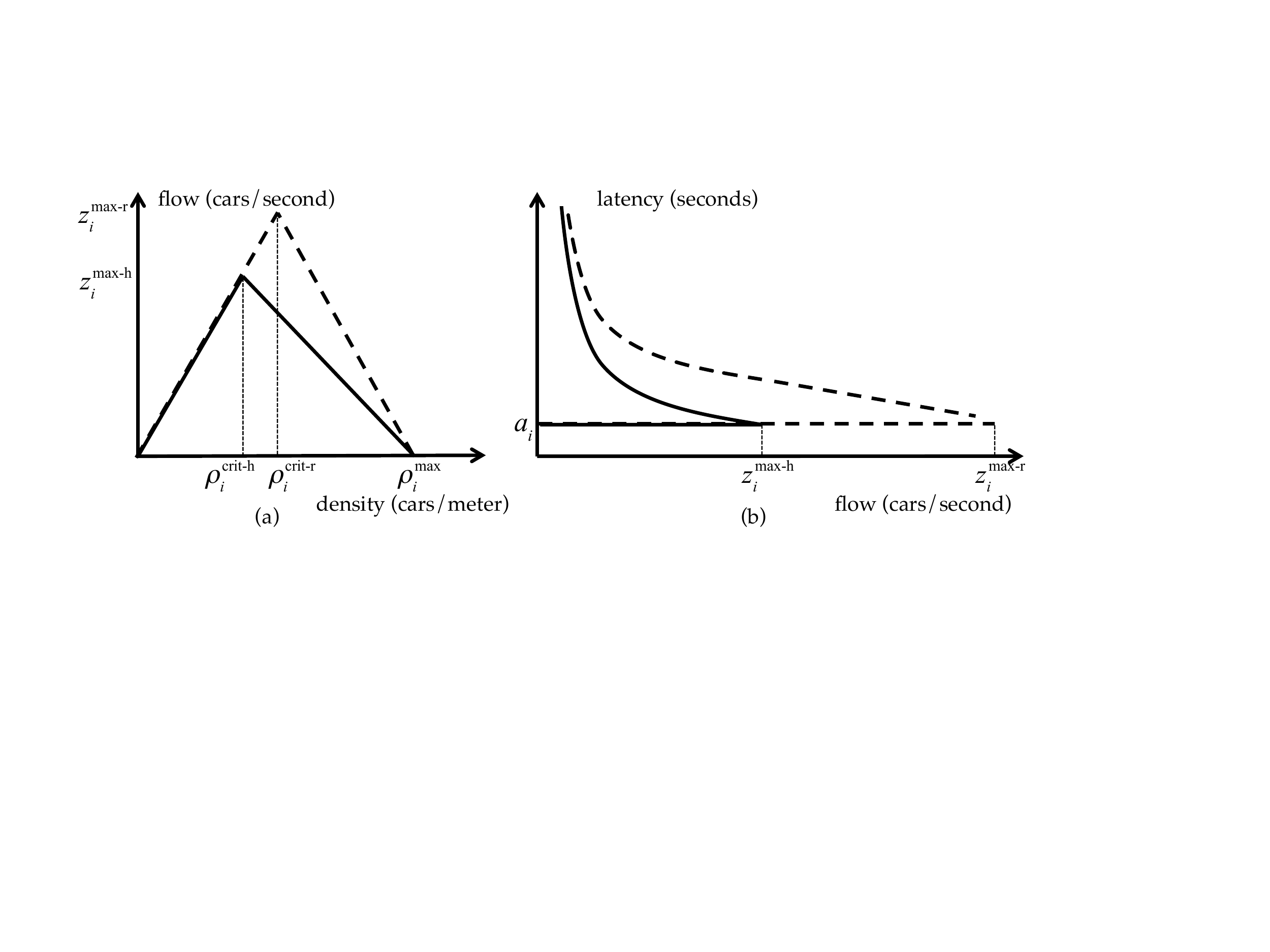}
	\vspace*{-18px}
	\caption{\textbf{(a)} The fundamental diagram of traffic for roads with all human-driven (solid) and all autonomous (dashed) vehicles. Congestion begins at a higher vehicle density as autonomous vehicles require a shorter headway when following other vehicles. Superscripts $\text{-h}$ and $\text{-r}$ denote parameters corresponding to purely human and purely autonomous traffic, respectively. \textbf{(b)} The relationship between vehicle flow and latency also changes in the presence of autonomous vehicles. Free-flow speed remains the same but maximum flow on a road increases.}
	\vspace*{-16px}
	\label{fig:mixed_fund_diag_and_laten}
\end{figure}

To formalize the relationship between autonomy level and critical density on road $i$, we assume at nominal velocity autonomous vehicles and humans require headway $\bar{h}_i$ and $h_i$, respectively, with $\bar{h}_i\!\le\!h_i$. This inequality reflects the assumption that autonomous vehicles can maintain a short headway, regardless of the type of vehicle they are following. Then, if $x_i$ and $y_i$ denote the flow of human-driven and autonomous vehicles respectively, the critical density is
\begin{align*}
	\rhocrit_i(x_i,y_i)= \frac{1}{\frac{y_i}{x_i+y_i}\bar{h}_i + \frac{x_i}{x_i+y_i}h_i + L} \;,
\end{align*}
where all vehicles have length $L$.
Here the denominator represents the average length from one car's rear bumper to the preceding car's rear bumper when all cars follow the vehicle in front of them with nominal headway. Note that critical density is expressed here as a function of flows $x_i$ and $y_i$. This quantity can also be expressed as a function of \emph{autonomy level} $\alpha_i(x_i,y_i)=\frac{y_i}{x_i+y_i}$, i.e., the fraction of vehicles on road $i$ that are autonomous. Since flow increases linearly with density until hitting $\rhocrit_i$, the maximum flow can also be expressed as a function of autonomy level: $\zmax_i(x_i,y_i)=v^f_i \rhocrit_i(x_i,y_i).$

The latency on a road is a function of vehicle flows on the road as well as a binary argument $s_i$, which indicates whether the road is congested:
\vspace{-6pt}
\begin{align}\label{eq:latency}
	\ell_i(x_i,y_i,s_i)=
	\begin{cases}
	\frac{d_i}{v^f_i} \; , & s_i=0, \\
	d_i(\frac{\rhomax_i}{x_i + y_i}+ \frac{\rhocrit_i(x_i,y_i)-\rhomax_i}{\zmax_i(x_i,y_i)})\; , & s_i=1.
	\end{cases} 
\end{align}
\vspace{-6pt}

We define the free-flow latency on road $i$ as $a_i:=d_i/v^f_i$, where $d_i$ is the length of road $i$. Fig.~\ref{fig:mixed_fund_diag_and_laten}~(b) illustrates the effect of mixed autonomy on latency. 

\begin{wrapfigure}{r}{0.36\textwidth}
	\centering
	\vspace{-22pt}
	\includegraphics[width=\linewidth]{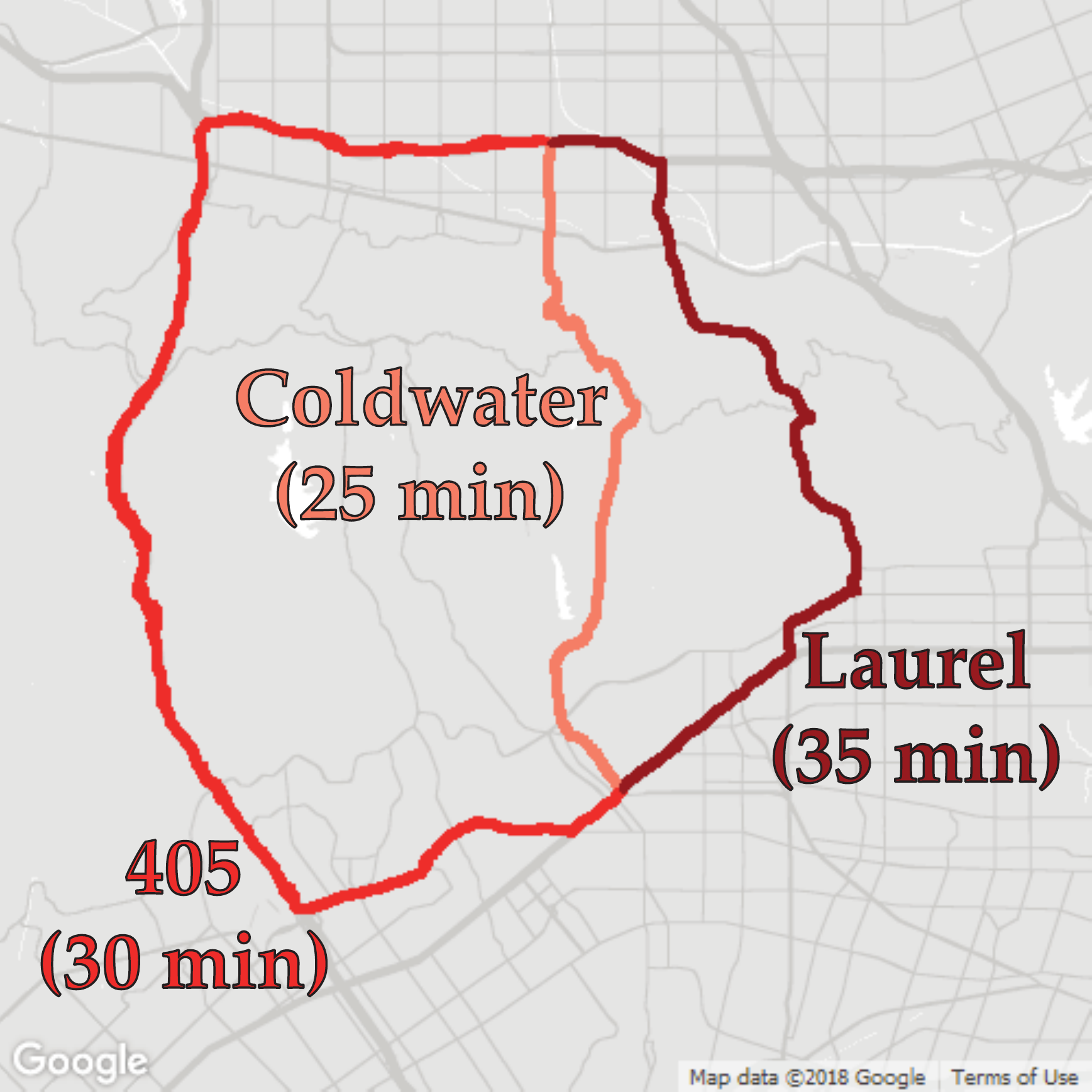}
	\vspace{-20pt}
	\caption{The map with three possible routes for the example.}
	\vspace{-10pt}
\end{wrapfigure}
\noindent\textbf{Demonstrative Example.} 
We use the following running example to intuitively describe the concepts formalized in Main Results (Sec.~\ref{sct:main_results}).
Imagine driving on a Friday afternoon in Los Angeles, where you plan to drive from the Beverly Hills library to the Valley. The most direct route, Coldwater Canyon, takes 25 minutes without traffic. Taking the 405 freeway would be 30 minutes, and Laurel Canyon would be 35 minutes in free-flow. Unfortunately, it is Friday afternoon and the flow is anything but free!
Let us assume that everyone is in the same predicament -- meaning that all traffic on these roads is from people with the same start and end points. People would only take Laurel Canyon if Coldwater and the 405 were congested to the point that each take at least 35 minutes. Further, any route that people use will have the same latency, otherwise they would switch to the quicker route. Fig.~\ref{fig:example_equilibria} illustrates three such equilibria with varying delays.

\begin{figure}
	\centering
	\vspace*{-18px}
	\includegraphics[width=1\linewidth]{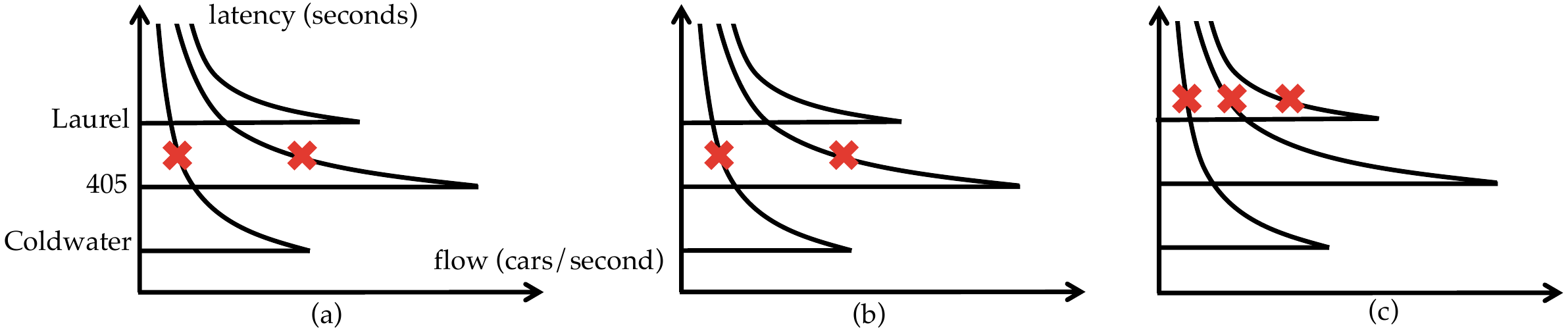}
	\vspace*{-18px}
	\caption{Illustration of equilibria in the network. Equilibria can have one road in free-flow and others congested (a) or all used roads can be congested (b, c).}
	\vspace*{-18px}
	\label{fig:example_equilibria}
\end{figure}

Krichene et al. have shown that for any given volume of traffic composed of only human drivers, there exists one \emph{best equilibrium} which is an equilibrium where one road is in free-flow and all other used roads are congested -- and the number of roads used is minimized \cite{krichene2017stackelberg}. \emph{However, when there are multiple types of vehicles that affect congestion differently, as we see in mixed-autonomy, there can be multiple equilibria with the same aggregate delay.}

Concretely, autonomous vehicles can form \emph{platoons} to increase the free-flow capacity of a road, an effect that is accentuated on freeways. This effect can be used not only to find equilibria that minimize aggregate delay, but also to find equilibria, within the set of delay-minimizing equilibria, that can accommodate extra unforeseen flow demand. As an example, Fig.~\ref{fig:example_mixed_aut} (a) and (b) have the same delay, whether all autonomous flow (green) is the 405 or is split between the 405 and Coldwater. However, (b) has higher autonomy level on the 405 than (a) and can therefore accommodate more additional flow on the 405, since it matches its autonomous vehicles with the roads that benefit most from them. If a social planner can dictate equilibrium routing but does not have perfect information about flow demands, using a routing such as in (b) can make the routing more \emph{robust} to unforeseen demand.

\begin{figure}
	\centering
	\vspace*{-14px}
	\includegraphics[width=1\linewidth]{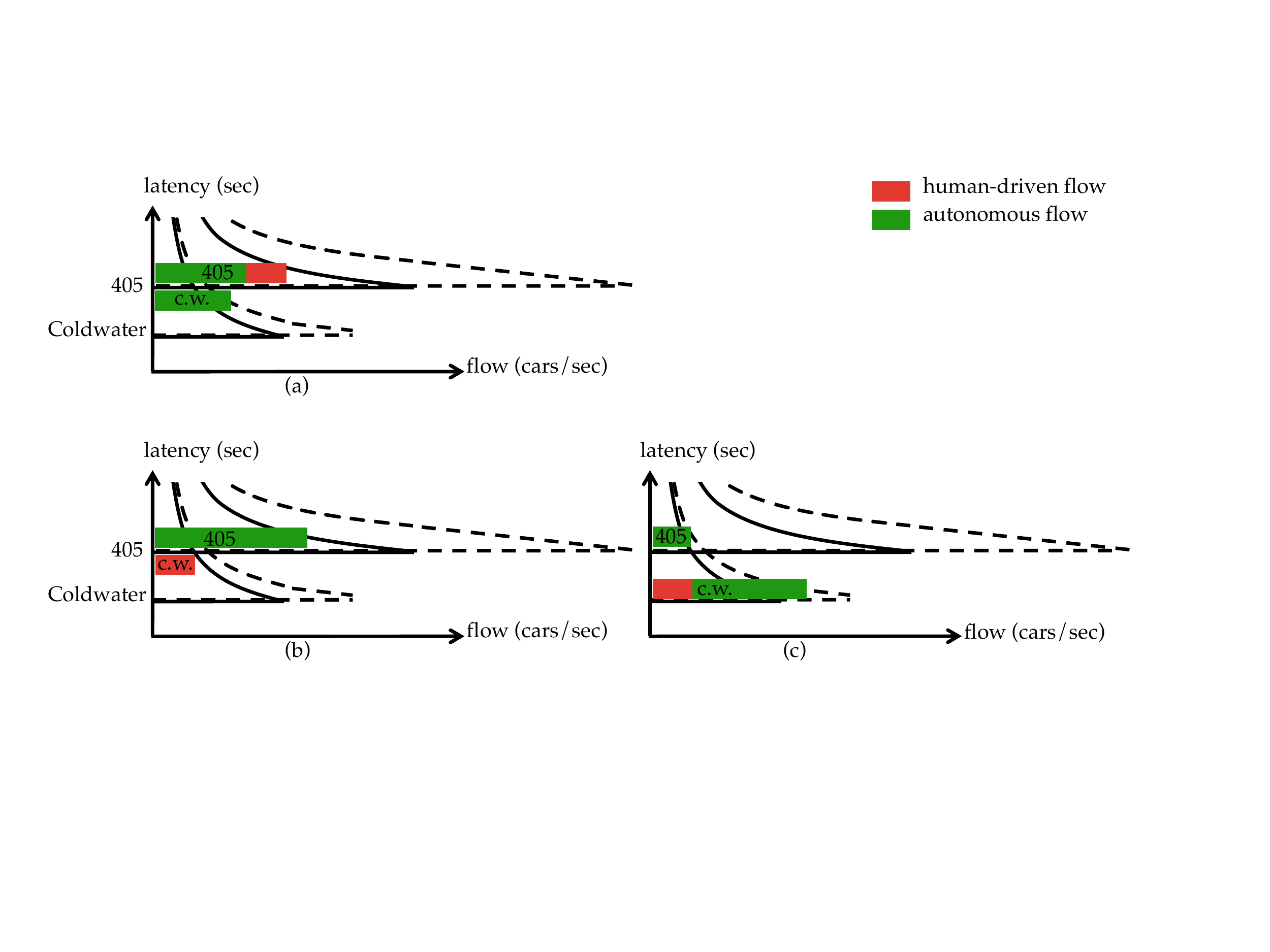}
	\vspace*{-18px}
	\caption{(a) and (b) represent two selfish equilibria with the same social cost. The equilibrium with all autonomous flow on the 405 freeway (b) is more robust to additional unforeseen flow demand. Solid (resp. dashed) lines characterize the road with only human drivers (resp. autonomous vehicles). The 405 has maximum flow that benefits more from autonomy than the canyons -- by routing all autonomous traffic onto the 405, a social planner can make the routing more robust to unforeseen demand. If some autonomous users are altruistic, a social planner can send them on the 405 while Coldwater is uncongested (c), leading to a lower overall travel time.}
	\vspace*{-18px}
	\label{fig:example_mixed_aut}
\end{figure}

The social cost can further be improved if users of autonomous vehicles are \emph{altruistic} and are willing to endure a delay longer than the quickest route available in order to improve traffic. Assume the autonomous cars are owned by entities concerned for the public good and consider a population of autonomous car users who, in exchange for cheaper rides, are willing to accept a delay of 20\% longer than the quickest available route. In this case, a social planner can send some autonomous users on the 405 so that Coldwater is in an uncongested state, yielding a social cost lower than that of the best-case selfish equilibria (Fig.~\ref{fig:example_mixed_aut}~(c)).

Before presenting our main results, we also define our network and objective.
\noindent\textbf{Network Objectives.} We consider a network of $N$ parallel roads, with indices $\{1,\ldots,n\}$. For convenience we assume no two roads have identical free-flow latencies, and accordingly, order our roads in order of increasing free-flow latency. We use $\midcs=\{1,\ldots,m\}$ to denote the set of roads with indices $1$ through $m$.

We consider an inelastic demand of $\barx$ and $\bary$ (human-driven and autonomous vehicle flow, respectively), meaning that the total flow of vehicles wishing to travel across the network is independent of the delays on the roads. We describe the network state by $(\bx, \by, \bs)$, where $\bx$, $\by \in \mathbb{R}^N_{\ge 0}$ and $\bs \in \{0,1\}^N$. A \emph{feasible routing} is one for which $x_i + y_i \le \zmax_i(x_i,y_i)$ for all roads, where $\zmax_i$ denotes the maximum flow on road $i$. We denote total flow on a road by $z_i:=x_i+y_i$. We are interested in finding a routing, i.e. allocation of vehicles into the roads, that minimizes the total latency experienced by all vehicles, $C(\bx,\by,\bs)=\sum_{i\in \nidcs}(x_i + y_i)\ell_i(x_i,y_i,s_i)$, while satisfying the demand, i.e. $\sum_{i\in \nidcs}x_i = \barx$ and $\sum_{i\in \nidcs}y_i = \bary$.
Further, we constrain this optimization based on \emph{selfishness} or \emph{altruism} of the vehicles, which we will define in Section~\ref{sct:main_results}.

\smallspacingsection{Main Results}\label{sct:main_results}
We now make precise the aforementioned notions of selfishness, robustness, and altruism. We develop properties of the resulting equilibria, and using those, provide polynomial-time algorithms for computing optimal vehicle flows.

\noindent\textbf{Selfishness.} Human drivers are often perceived as selfish, i.e., they will not take a route with long delay if a quicker route is available to them. If all drivers are selfish this leads to a \emph{Nash}, or \emph{Wardrop Equilibrium}, in which no driver can achieve a lower travel time by unilaterally switching routes \cite{florian1995network, dafermos1980variationalinequalities}. In the case of parallel roads, this means that all selfish users experience the same travel time. \vspace{-4pt}
\begin{definition}
	The \textbf{\emph{longest equilibrium road}} is the road with maximum free-flow latency that has delay equal to the delay experienced by selfish users. Let $\meq$ denote the index of this road. We then use $\text{NE}(\barx,\bary,\meq)$ to denote the set of Nash Equilibria with \emph{longest equilibrium road} having index $\meq$. 
\end{definition} 
\vspace{-10pt}
\begin{definition}
	The \textbf{\emph{longest used road}} is the road with maximum free-flow latency that has positive vehicle flow of any type on it. We use $\mall$ to denote the index of this road; if all vehicles in a network are selfish then $\meq=\mall$.
\end{definition}
\vspace{-5pt}

We define the set of \textbf{\emph{Best-case Nash Equilibria (BNE)}} as the set of feasible routings in equilibrium that minimize the total latency for flow demand $(\barx, \bary)$, denoted  $\text{BNE}(\bar{x}, \bar{y})$. The following theorem provides properties of the set of BNE for mixed-autonomy roads (for pure roads see \cite{krichene2017stackelberg}).
\vspace{-4pt}
\begin{theorem}\label{thm:BNE}
	There exists a road index $\mstareq$ such that all routings in the set of BNE have the below properties. Further, this index $\mstareq$ is the minimum index such that a feasible routing can satisfy the properties:
	\begin{enumerate}[nosep]
		\item road $\mstareq$ is in free-flow,
		\item roads with index less than $\mstareq$ are congested with latency $a_{\mstareq}$, and
		\item all roads with index greater than $\mstareq$ have zero flow.
	\end{enumerate}
\end{theorem}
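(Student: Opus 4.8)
The plan is to first reduce cost minimization to \emph{delay} minimization. In any Nash equilibrium on parallel roads every used road carries the common equilibrium delay $\ell^*$, so $C(\bx,\by,\bs)=\sum_{i\in\nidcs}(x_i+y_i)\ell_i(x_i,y_i,s_i)=\ell^*\sum_{i\in\nidcs}(x_i+y_i)=\ell^*(\barx+\bary)$. Since the demand $\barx+\bary$ is fixed, $\text{BNE}(\barx,\bary)$ is \emph{exactly} the set of equilibria whose common delay $\ell^*$ is minimal, and it suffices to characterize the minimum-delay equilibria (a minimizer exists by compactness of the feasible set).

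Next I would record the equilibrium structure. A used road in free-flow has latency exactly $a_i$, so it can carry the common delay only if $a_i=\ell^*$; because free-flow latencies are distinct, at most one used road is in free-flow. Suppose a minimum-delay equilibrium has such a road, and let $\mstareq$ be its index, so $a_{\mstareq}=\ell^*$. Every road $i<\mstareq$ has $a_i<\ell^*$ and must carry flow---otherwise a driver could deviate onto it and enjoy free-flow latency $a_i<\ell^*$, contradicting equilibrium---hence it is congested at $\ell^*=a_{\mstareq}$, giving properties~1 and~2. Every road $j>\mstareq$ has $a_j>\ell^*$ and cannot be used (its latency, free-flow or congested, would exceed $\ell^*$), giving property~3. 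Conversely, any routing of this form is itself an equilibrium with delay $a_{\mstareq}$, so if a smaller index admitted such a routing it would yield an equilibrium of strictly smaller delay---impossible at the minimum; thus $\mstareq$ is the minimum feasible index. Since every element of $\text{BNE}$ shares the delay $a_{\mstareq}$, the same reasoning (together with the improvement lemma of the next paragraph, which forces road $\mstareq$ to carry flow) pins every BNE to this common structure.

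The one gap left is to rule out minimum-delay equilibria in which \emph{all} used roads are congested. I would close it with an improvement lemma: if an equilibrium has delay $\ell$ with longest used road $k$ congested (so $a_k<\ell$), then $(\barx,\bary)$ can be re-routed into an equilibrium of strictly smaller delay $a_k$ in which road $k$ is in free-flow and roads $1,\dots,k-1$ are congested at $a_k$. Granting the lemma, an all-congested equilibrium is never delay-minimal, so every BNE has a free-flow road and hence the structure above, completing the proof.

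Proving this lemma is the main obstacle. The favorable facts are that lowering the target latency from $\ell$ to $a_k$ strictly increases the flow each congested road carries at every fixed autonomy level, and that the free-flow feasible set of road $k$, namely $\{(x,y)\ge 0 : (h_k+L)x+(\bar{h}_k+L)y\le v^f_k\}$ obtained by expanding $x_k+y_k\le\zmax_k(x_k,y_k)$, is a linear, downward-closed region that strictly contains the old congested load $(x_k^0,y_k^0)$. The idea is to let roads $1,\dots,k-1$ absorb their extra capacity and place the remaining demand on road $k$ in free-flow. The difficulty is matching the human demand $\barx$ and the autonomous demand $\bary$ \emph{simultaneously}: holding autonomy levels fixed while congested roads soak up surplus flow can over-absorb one vehicle type and drive road $k$'s residual demand negative. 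I would resolve this by treating the autonomy levels on roads $1,\dots,k-1$ and the split on the free-flow road as free parameters and running a continuity/intermediate-value argument along the homotopy $\lambda:\ell\to a_k$, conserving total flow while using the two-dimensional slack of the free-flow region to rebalance the two vehicle types. This simultaneous two-commodity matching is the crux of the argument.
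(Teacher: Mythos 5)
Your high-level strategy matches the paper's: reduce cost-minimization to delay-minimization, characterize equilibria that possess a free-flow road, and rule out all-congested equilibria from being delay-minimal via an improvement lemma. But that improvement lemma is exactly where the paper's real work lies (its Lemma~1), and you leave it unproven --- you yourself call it ``the crux'' and offer only a homotopy/intermediate-value sketch. Worse, your lemma is false as stated. If roads $1,\dots,k-1$, when congested at the common latency $a_k$, are forced to carry more total flow than the whole demand $\barx+\bary$ (which happens, e.g., when a small demand sits in a deeply congested equilibrium, since in the congested regime a fixed latency pins each road to a specific positive flow), then no feasible routing has road $k$ in free-flow with roads $1,\dots,k-1$ congested at exactly $a_k$: flow conservation would require negative flow on road $k$. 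The correct conclusion in that case is an equilibrium on \emph{strictly fewer} roads with delay strictly below $a_k$, i.e., the free-flow road's index must be allowed to be some $m'\le k$ determined a posteriori. This is precisely the over-absorption problem you flag, and it cannot be repaired by rebalancing autonomy levels alone; it requires recursion on the road index.

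The paper closes the gap with a constructive transfer-and-recurse argument that also dissolves your two-commodity worry. Starting from an all-congested equilibrium on roads $\midcs$, continuously move flow (in whatever mix of the two types is present) off road $m$ onto roads ${[}m-1{]}$, keeping the latencies of roads ${[}m-1{]}$ equal to one another; this is always possible because congested latency is continuous and strictly decreasing in \emph{both} flow types, so no particular autonomy split is ever required --- only equal latencies, and demand is conserved identically throughout. The process ends in one of two ways: either the common latency on ${[}m-1{]}$ falls to $a_m$, and the leftover flow on road $m$ is carried in free-flow, yielding an equilibrium with delay $a_m$; or road $m$ empties first, in which case one recurses on road $m-1$. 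The recursion terminates because a single road can always carry any feasible flow in free-flow (congested flow is below maximum flow). With this weaker but correct lemma in hand, the rest of your argument --- the structure of equilibria having a free-flow road, minimality of $\mstareq$, and the common free-flow index across all of BNE --- goes through essentially as you wrote it.
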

\vspace{-4pt}

The running example in Fig.~\ref{fig:example_mixed_aut} shows that there does not necessarily exist a unique BNE. However, as estimating flow demand can be difficult, we prefer to choose a BNE that can best incorporate additional unforeseen demand. We thus develop the notion of \emph{robustness}.
\vspace{-4pt}
\begin{definition}\label{def:robustness}
	The \emph{robustness} of a routing in the set of Nash Equilibria is how much additional traffic (as a multiple of the original demand flow), at the overall autonomy level, can be routed onto the free-flow road. Formally, if $(\bx, \by)\in \text{NE}(\barx,\bary, m)$, then it has robustness:
	\vspace{-4pt}
	\begin{align}
		& \beta(\bx, \by,\barx,\bary,s_{m}):= \label{eq:robustness} \\
		& \quad \begin{cases}
		\max \; \gamma \; \text{s.t.} \; x_{m} + y_{m} + \gamma(\barx + \bary) \le \zmax_{m}(x_{m} + \gamma \barx, y_{m} + \gamma \bary) & s_{m}=0 \\
		0 & s_{m} = 1 \; .
		\end{cases} \nonumber
	\end{align}
\end{definition}

We use \textbf{\emph{Robust Best Nash Equilibria (RBNE)}} to refer to the subset of BNE that maximize robustness. The RBNE may not be unique.

\noindent\textbf{Altruism.} It is possible that some passengers, especially those in autonomous vehicles, can be incentivized to use routes that are not quickest for that individual, but instead lead to a lower social cost (Fig.~\ref{fig:example_mixed_aut}). We use the term \emph{altruism profile} to refer to the distribution of the degree to which autonomous vehicles are willing to endure longer routes. For computational reasons, we consider altruism profiles with a finite number of altruism levels.

\begin{figure}
	\centering
	\vspace*{-15px}
	\includegraphics[width=1\linewidth]{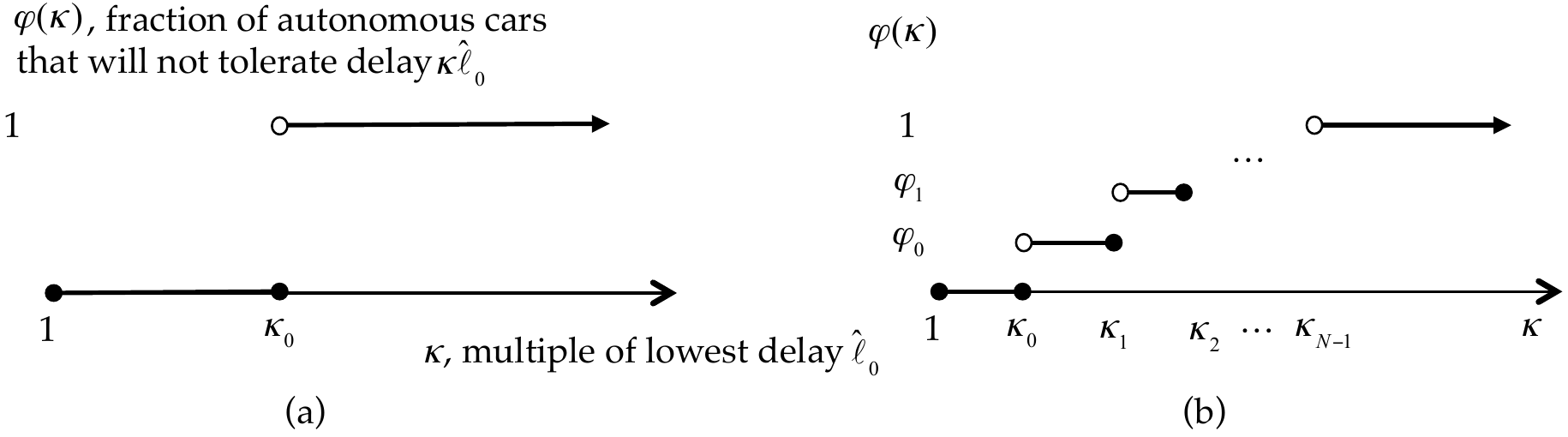}
	\vspace*{-15px}
	\caption{Altruism profiles. A fraction $\altruismFn(\altruismArg)$ of autonomous users will not accept delay greater than $\altruismArg$ times that of the quickest available route. \textbf{(a)} Users will tolerate delay of up to $\altruismUnif$ times that of the quickest route. \textbf{(b)} Users have multiple altruism levels.}
	\vspace*{-18px}
	\label{fig:altruism}
\end{figure}

Formally, we define $\altruismFn: \mathbb{R}_{\ge 0} \rightarrow {[}0,1{]}$ to represent the altruism profile as a nondecreasing function of a delay value that is mapped to ${[}0,1{]}$. A volume of $\altruismFn(\altruismArg)\bary$ autonomous flow will reject a route incurring delay $\altruismArg$ times the minimum route delay available, which we denote by $\eqDelay$. If autonomous users have a uniform altruism level as in Fig.~\ref{fig:altruism}~(a), we call them \emph{$\altruismUnif$-altruistic} users, where $\altruismUnif$ is the maximum multiple of the minimum delay that autonomous users will accept. Users may have differing altruism levels, as in Fig.~\ref{fig:altruism}~(b). We use $\altruismLevSet$ to denote the set of altruism levels, with cardinality $|\altruismLevSet|$. Accordingly, a feasible routing $(\bx, \by, \bs)$ is in the set of \textbf{\emph{Altruistic Nash Equilibria (ANE)}} if 
\begin{enumerate}[nosep]
	\item all routes with human traffic have latency $\eqDelay \le \ell_i(x_i,y_i,s_i)$ $\forall i \in \nidcs$ and
	\item for any $\ell \geq 0$, a volume of at least $\altruismFn(\ell/\eqDelay) \; \bar{y}$ autonomous traffic experiences a delay less than or equal to $\ell$. Note that it is enough to check this condition for $\ell = \ell_i(x_i,y_i,s_i)$ for all $i$.\footnote{Note that this is similar to the notion of \emph{epsilon-approximate Nash Equilibrium} \cite{nisan2007algorithmic}, but with populations playing strategies that bring them within some factor of the best strategy available to them, with each population having a different factor.}
\end{enumerate}

We denote the set of routings at Altruistic Nash Equilibria with demand $(\barx,\bary)$, equilibrium latency $\eqDelay$, and altruism profile $\altruismFn$ as $ANE(\barx,\bary,\eqDelay,\altruismFn)$. The set of \textbf{\emph{Best-case Altruistic Nash Equilibria (BANE)}} is the subset of ANE with routings that minimize total latency.
\vspace{-4pt}
\begin{theorem}\label{thm:BANE}
	There exist a longest equilibrium road $\mstareq$ and a longest used road $\mstarall$ with $\mstareq \le \mstarall$, such that all routings in BANE have the following properties:
	\begin{enumerate}[nosep]
		\item roads with index less than $\mstareq$ are congested,
		\item roads with index greater than $\mstareq$ are in free-flow,
		\item roads with index greater than $\mstareq$ and less than $\mstarall$ have maximum flow.
	\end{enumerate}
\end{theorem}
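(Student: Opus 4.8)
The plan is to fix an arbitrary routing $(\bx,\by,\bs)\in\text{BANE}(\barx,\bary)$ and prove each property by a local exchange: if a property fails I exhibit another routing in $\text{ANE}(\barx,\bary,\eqDelay,\altruismFn)$ with strictly smaller social cost $C$, contradicting optimality. Two facts about the latency model~\eqref{eq:latency} drive the argument. (Fact~1) For fixed $(x_i,y_i)$ with $z_i<\zmax_i(x_i,y_i)$ the free-flow branch has latency $a_i$, strictly below the congested branch; since a congested road carries $z_i\le\zmax_i(x_i,y_i)$, switching it to free-flow at the same flows is feasible and strictly lowers its latency, hence $C$. (Fact~2) Since $\eqDelay=\min_i\ell_i$, every road obeys $\ell_i\ge\eqDelay$, so a road with $a_i<\eqDelay$ is neither in free-flow (that gives $\ell_i=a_i<\eqDelay$) nor empty (an empty road still offers free-flow latency $a_i$) and must be congested. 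Let $\mstareq$ be the longest equilibrium road and $\mstarall$ the longest road with positive flow. From $\ell_{\mstareq}=\eqDelay$ we get $a_{\mstareq}\le\eqDelay$, so $a_i<\eqDelay$ for every $i<\mstareq$; and since $\mstareq$ is the \emph{longest} road at latency $\eqDelay$, every used road $j>\mstareq$ has $\ell_j>\eqDelay$.

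Property~1 is now immediate: by Fact~2 each road $i<\mstareq$, having $a_i<\eqDelay$, is congested. For Property~2, consider a used road $j>\mstareq$; it carries only autonomous flow, since $\ell_j>\eqDelay$ and no selfish human accepts latency above $\eqDelay$. In the principal case $a_j>\eqDelay$, if road $j$ were congested then Fact~1 switches it to free-flow at latency $a_j>\eqDelay$: this strictly lowers $C$, keeps every latency at least $\eqDelay$ so the human-equilibrium condition is intact, and only decreases the delay of the autonomous mass on $j$, so each coverage inequality $\sum_{i:\ell_i\le\ell}y_i\ge\altruismFn(\ell/\eqDelay)\bary$ is preserved---a contradiction. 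The remaining possibility, a ``cheap'' road with $a_j<\eqDelay$ yet congested above $\eqDelay$, is excluded separately because its flow could be carried far more cheaply, which I treat with the global argument below; this yields $a_j>\eqDelay$ for all used $j>\mstareq$ and hence Property~2.

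For Property~3 I use a ``fill the faster road first'' exchange. Suppose a road $j$ with $\mstareq<j<\mstarall$ is in free-flow with $z_j<\zmax_j(x_j,y_j)$. Since $j<\mstarall$, the longest used road $\mstarall$ carries positive flow, which is autonomous (as $a_{\mstarall}>\eqDelay$) and, by Property~2, in free-flow. Transferring an infinitesimal autonomous mass from $\mstarall$ to $j$ changes cost at rate $a_j-a_{\mstarall}<0$, since free-flow latencies do not depend on composition, and preserves every coverage inequality because the relocated mass moves to strictly lower latency. Feasibility of the receiving road is immediate---$j$ had slack and admitting autonomous flow only raises $\zmax_j$---so, once the donor side is handled, this strict improvement contradicts optimality and forces every intermediate road to maximum flow. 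Finally $\mstareq\le\mstarall$ because road $\mstareq$ is itself used.

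Three points require the real care, all tied to the mixed-autonomy, altruistic structure. First, the donor road above: because $\zmax_i$ depends on autonomy level, draining autonomous flow from $\mstarall$ lowers its capacity, so when $\mstarall$ is exactly at maximum flow a naive transfer could tip it into congestion; I would transfer just enough that $\mstarall$ stays in free-flow while $j$---whose capacity grows as autonomous flow arrives---absorbs the surplus, keeping the net motion slow-to-fast and strictly cost-decreasing. Second, excluding a cheap-but-congested road of index beyond $\mstareq$ is genuinely global: decongesting it would drop $\eqDelay$, so I would instead compare the whole routing against the cost-minimizing equilibrium at the lower latency and show the original is not optimal. Third is the invariance the theorem asserts---that one pair $(\mstareq,\mstarall)$ serves every BANE: here I would argue cost minimization pins down $\eqDelay$, so distinctness of the $a_i$ fixes $\mstareq$, while the profile $\altruismFn$ with a minimal-index characterization as in Theorem~\ref{thm:BNE} fixes $\mstarall$. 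Establishing that two equal-cost BANE cannot realize different $\eqDelay$---by balancing the congested fast-road throughput against the demand net of altruistically diverted autonomous flow---is the crux I expect to be hardest.
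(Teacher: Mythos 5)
Your proposal takes essentially the same route as the paper's own proof: property 1 from the selfishness/equilibrium structure, property 2 by switching a congested, autonomous-only road to free-flow at unchanged flows (feasible since any congested flow is below $\zmax_i$, strictly cost-decreasing, and preserving both ANE conditions), and property 3 by transferring autonomous flow from road $\mstarall$ to an intermediate free-flow road with slack. Moreover, the two steps you explicitly defer are precisely the steps the paper's proof omits: its one-line monotonicity argument for property 2 silently assumes that decongesting a road $j>\mstareq$ leaves its latency $a_j\ge\eqDelay$, whereas your ``cheap but congested'' case ($a_j<\eqDelay$, forced congested by the equilibrium condition) is a live possibility among ANEs that can only be excluded from BANE by a global re-routing at a lower equilibrium latency --- indeed the paper's own supplementary case study exhibits a BANE in which a road with $a_i<\eqDelay$ (road $\mstareq$ itself) is congested, so such configurations are not vacuous; likewise the paper never argues that a single pair $(\mstareq,\mstarall)$ serves \emph{all} routings in BANE, the uniformity you correctly identify as the hardest point. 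One simplification you can bank: your donor-side feasibility worry in property 3 is moot, because road $\mstarall$ carries purely autonomous flow, and $\zmax_{\mstarall}(0,y)$ does not depend on $y$, so draining it can neither violate feasibility nor tip it into congestion.
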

\vspace{-10pt}
\begin{remark}
	One may be tempted to think that, like in BNE, road $\mstareq$ will be in free-flow. Further, one might think that $\mstareq$ is the minimum index such that all selfish traffic can be feasibly routed at Nash Equilibrium. We provide counterexamples to these conjectures in the supplementary materials.
\end{remark}
\vspace{-6pt}

\noindent\textbf{Finding the Best-case Nash Equilibria.} In general, the Nash Equilibrium constraint is a difficult combinatorial constraint. Theorem \ref{thm:BNE} however states that we can characterize the congestion profile of the roads by finding the minimum free-flow road such that Nash Equilibrium can be feasibly achieved. Once this is found, we select a feasible routing that maximizes robustness. This is formalized as follows: find the minimum $\meq$ such that $\text{NE}(\barx,\bary, \meq)$ is nonempty. Then the RBNE is the set of routings that maximize the robustness of road $\meq$:
\vspace{-6pt}
\begin{align}\label{opt:robust_BNE}
& \qquad \mstareq = \argmin_{\meq \in \nidcs} a_\meq \; \text{s.t.} \; \text{NE}(\barx,\bary, \meq) \neq \emptyset \; \text{, then} \nonumber \\
&\text{BNE}(\barx,\bary, \mstareq) = \argmax_{(\bx,\by,\bs) \in \text{NE}(\barx,\bary, \mstareq) } \beta(\bx,\by,\barx, \bary, s_{\mstareq}) \; .
\end{align}
\vspace{-14pt}
\begin{theorem}\label{thm:compute_RBNE}
	Finding a solution to \eqref{opt:robust_BNE} is equivalent to finding a routing in the set of RBNE, if any exist. Further, \eqref{opt:robust_BNE} can be solved in $O(N^4)$ time.
\end{theorem}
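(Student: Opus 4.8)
The plan is to treat the two claims separately: first the equivalence with RBNE, which is structural and leans on Theorem~\ref{thm:BNE}, then the running time, whose crux is a linearization that turns each subproblem of \eqref{opt:robust_BNE} into a linear program.

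\textbf{Equivalence.} For any $(\bx,\by,\bs)\in\text{NE}(\barx,\bary,\meq)$ the Wardrop condition forces all used roads to share a common latency, and when road $\meq$ is in free-flow this latency is $a_\meq$; since total flow is conserved the social cost collapses to $C(\bx,\by,\bs)=(\barx+\bary)\,a_\meq$. Because roads are indexed by increasing free-flow latency, minimizing cost over feasible equilibria is equivalent to choosing the smallest index whose equilibrium set is nonempty. I would then invoke Theorem~\ref{thm:BNE} to rule out any equilibrium whose longest equilibrium road has index below $\mstareq$: such a routing could be rearranged (moving the longest used road into free-flow at the lower latency $a_\meq$, keeping shorter roads congested) into a feasible routing with the BNE structure at an index smaller than $\mstareq$, contradicting the minimality of $\mstareq$ asserted by Theorem~\ref{thm:BNE}. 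Hence the first line of \eqref{opt:robust_BNE} returns exactly $\mstareq$ and its solution set is $\text{BNE}(\barx,\bary)$. The second line maximizes $\beta$ over this set; since by Definition~\ref{def:robustness} any routing whose longest equilibrium road is congested has $\beta=0$, the maximizer is attained at a free-flow (hence cost-minimizing) routing, so \eqref{opt:robust_BNE} returns precisely a routing in RBNE, and RBNE is nonempty exactly when some $\text{NE}(\barx,\bary,\meq)$ is.

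\textbf{Linearization (the main obstacle).} The difficulty is that membership in $\text{NE}(\barx,\bary,\meq)$ and the robustness objective are stated through the nonlinear latency \eqref{eq:latency} and the flow-dependent quantities $\rhocrit_i,\zmax_i$. I would show that, once the congestion pattern is fixed and the target latency is set to $a_\meq$, every such constraint is affine in $(\bx,\by)$. Writing $z_i=x_i+y_i$ and substituting $\rhocrit_i=\big((\bar h_i-h_i)\tfrac{y_i}{z_i}+h_i+L\big)^{-1}$ and $\zmax_i=v^f_i\rhocrit_i$ into the congested branch of \eqref{eq:latency}, the equation $\ell_i(x_i,y_i,1)=a_\meq$ becomes, after multiplying through by $z_i$, a linear equation in $(x_i,y_i)$: the factor $\rhocrit_i$ appears only in the combinations $\rhocrit_i z_i$ and $z_i/\zmax_i=\big((\bar h_i-h_i)y_i+(h_i+L)z_i\big)/v^f_i$, both affine in $(x_i,y_i)$, so all denominators cancel. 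The same substitution turns the free-flow feasibility inequality $z_\meq\le\zmax_\meq(x_\meq,y_\meq)$ and the robustness constraint of Definition~\ref{def:robustness} into linear inequalities; clearing the denominator in the latter gives $(\bar h_\meq-h_\meq)(y_\meq+\gamma\bary)+(h_\meq+L)\big(z_\meq+\gamma(\barx+\bary)\big)\le v^f_\meq$, whose coefficient of $\gamma$ is positive since $\bar h_\meq\le h_\meq$, so $\gamma$ is bounded above and the maximum is attained. Together with the demand equalities $\sum_i x_i=\barx$, $\sum_i y_i=\bary$ and nonnegativity, the entire description is a system of affine constraints.

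\textbf{Complexity.} For fixed $\meq$, deciding $\text{NE}(\barx,\bary,\meq)\neq\emptyset$ is thus a linear feasibility program in the $O(N)$ variables $\{(x_i,y_i)\}_{i\le\meq}$ with $O(N)$ constraints, solvable in $O(N^3)$ time. Scanning $\meq=1,\dots,N$ in increasing order and returning the first feasible index yields $\mstareq$ in $O(N^4)$ time. For that $\mstareq$ I would solve one further linear program that maximizes $\gamma$ over the same affine feasible set, again in $O(N^3)$ time; by the equivalence this outputs a routing in RBNE. The dominant term is the $N$ feasibility solves, giving the claimed $O(N^4)$ bound. I expect the linearization to be the only nonroutine step: the cancellation of $\rhocrit_i$ and $\zmax_i$ is what upgrades the naively convex constraints to affine ones and thereby makes the linear-programming complexity available.
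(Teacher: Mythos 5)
Your proposal is correct and follows essentially the same route as the paper's proof: it uses Theorem~\ref{thm:BNE} to restrict attention to routings with the longest equilibrium road $\mstareq$ in free-flow, shows that the Nash-equilibrium latency constraints and the robustness constraint become affine in $(\bx,\by,\gamma)$ once denominators involving $\rhocrit_i$ and $\zmax_i$ are cleared, and concludes with an $O(N)$ scan over $\meq$ of $O(N^3)$ linear programs for the $O(N^4)$ bound. The only cosmetic difference is that the paper eliminates $\gamma$ by noting the robustness constraint is tight at the optimum (yielding the affine objective $f(x_\mstareq,y_\mstareq)$), whereas you keep $\gamma$ as an LP variable; both yield the same complexity.
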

\vspace{-4pt}

We provide a sketch of the algorithm for finding an element of the RBNE:
\begin{enumerate}[nosep]
	\item Search over $\meq$, the longest equilibrium road, to find the minimum $\meq$ that is feasible for required flow demands at Nash Equilibrium.
	\item For this $\meq$, find the routing that maximizes the robustness of road $\meq$. This can be formulated as a linear program, which has cubic computational complexity in the number of roads \cite{gonzaga1992path}.
\end{enumerate}

We provide the linear program to find feasibility and routing, which maximizes robustness while at free-flow equilibrium, subject to demand constraints.
\begin{align}
	&\max_{\bx, \by \in \mathbb{R}^N_{\ge 0}, \gamma \ge 0} \gamma \; \text{s.t.} \sum_{i \in \mstareqidcs}x_i = \barx, \, \sum_{i \in \mstareqidcs}y_i = \bary, \; \ell_i(x_i,y_i,1) = a_\mstareq \; \forall i \in {[}m-1{]}, \nonumber \\
	&\qquad \; \, x_\mstareq + y_\mstareq + \gamma(\barx + \bary) \le \zmax_\mstareq(x_\mstareq + \gamma \barx, y_\mstareq + \gamma \bary) \; . \nonumber
\end{align}

\noindent\textbf{Finding the Best-case Altruistic Nash Equilibria.} To find an element of the BANE, we need to solve:
\vspace{-6pt}
\begin{align}\label{opt:BANE}
\argmin_{\substack{\meq \in \nidcs, \; \; \eqDelay \in {[}a_\meq,a_{\meq+1}{)}, \; \; (\bx,\by,\bs) \in \text{ANE}(\barx,\bary,\eqDelay,\altruismFn)} } C(\bx,\by) \; .
\end{align}
As demonstrated in the supplementary material, the BANE no longer has monotonicity in $\meq$, the index of the longest equilibrium road. Further, road $\meq$ may not be in free-flow in the set of BANE. We do however have a degree of monotonicity in the latency on road $\meq$: for a fixed $\meq$, optimal social cost is minimized at the minimum feasible latency on road $\meq$.
\vspace{-4pt}
\begin{theorem}\label{thm:compute_BANE}
	Finding a solution to \eqref{opt:BANE} is equivalent to finding a routing in the set of BANE, if any exist. Further, \eqref{opt:BANE} can be solved in $O(|\altruismLevSet|N^5)$ time, where $|\altruismLevSet|$ is the number of altruism levels of autonomous vehicle users.
\end{theorem}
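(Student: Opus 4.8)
The plan is to prove the two assertions separately: first that a minimizer of \eqref{opt:BANE} is exactly a BANE routing, and then that the search can be organized into polynomially many linear programs.

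The equivalence is essentially definitional once the parametrization is understood. Any routing in ANE has a well-defined equilibrium latency $\eqDelay$ (the common delay of the selfish human traffic) and, by the definition of the longest equilibrium road, a unique index $\meq$ with $\eqDelay \in [a_\meq, a_{\meq+1})$: road $\meq$ attains delay $\eqDelay \ge a_\meq$, whereas road $\meq+1$ cannot, since its free-flow latency $a_{\meq+1}$ is already the smallest delay it can realize and $a_{\meq+1} > \eqDelay$. Hence, as $(\meq,\eqDelay)$ range over all admissible pairs, the feasible set of \eqref{opt:BANE} is exactly $\bigcup_{\meq,\eqDelay} \text{ANE}(\barx,\bary,\eqDelay,\altruismFn)$, i.e. the set of all altruistic equilibria. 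Minimizing $C$ over this set is the definition of BANE, so a solution of \eqref{opt:BANE} lies in BANE and conversely.

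For the algorithm I would use Theorem~\ref{thm:BANE} to collapse the combinatorial ANE constraint onto a fixed congestion profile once $\meq$ and $\eqDelay$ are chosen: roads with index less than $\meq$ are congested at delay $\eqDelay$, road $\meq$ carries the equilibrium delay $\eqDelay$ (congested when $\eqDelay > a_\meq$), and roads with index greater than $\meq$ are in free-flow with latency $a_i$ and carry only autonomous flow. The key calculation is that, for fixed $\eqDelay$, these constraints are all affine in $(x_i,y_i)$: substituting $\rhocrit_i$ and $\zmax_i$ into the congested branch of \eqref{eq:latency} and clearing the denominator turns $\ell_i(x_i,y_i,1)=\eqDelay$ into an affine equation in $(x_i,y_i)$, and $z_i \le \zmax_i(x_i,y_i)$ becomes $(x_i+y_i)(h_i+L) - y_i(h_i-\bar{h}_i) \le v^f_i$. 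Since every road's latency is then a constant ($\eqDelay$ or $a_i$), the cost $C=\sum_i z_i \ell_i$ is linear, the demand constraints $\sum_i x_i=\barx$, $\sum_i y_i=\bary$ are linear, and the altruism condition — which by the remark after the ANE definition need only be checked at $\ell=\ell_i$ — becomes the $O(N)$ cumulative linear inequalities $\sum_{j:\ell_j \le \ell} y_j \ge \altruismFn(\ell/\eqDelay)\,\bary$. Thus, for fixed $(\meq,\eqDelay)$, the problem is a linear program in the $O(N)$ variables $(\bx,\by)$, solvable in $O(N^3)$ time; the longest used road $\mstarall$ and the maximum-flow property of roads in $(\mstareq,\mstarall)$ need not be enumerated, as they emerge from the LP optimum, which Theorem~\ref{thm:BANE} guarantees has the stated form.

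The remaining task is to discretize the continuous parameter $\eqDelay$, and I expect this to be the main obstacle. The difficulty is twofold: the remark rules out monotonicity in $\meq$, forcing an exhaustive $O(N)$ search over the longest equilibrium road; and the product $\eqDelay\,z_i$ in the congested-latency equation is bilinear, so $\eqDelay$ cannot be folded into the LP as a free variable. My plan is to exploit that the altruism accessibility pattern — whether free-flow road $i$ is usable by level $\kappa$, i.e. $a_i \le \kappa\,\eqDelay$, and the constant values taken by $\altruismFn(a_i/\eqDelay)$ — changes only at the $O(|\altruismLevSet|\,N)$ breakpoints $\eqDelay=a_i/\kappa$. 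Between consecutive breakpoints the constraint structure is fixed, and the monotonicity fact stated before the theorem (for fixed $\meq$, cost is minimized at the minimum feasible latency on road $\meq$) lets me restrict attention to the lowest feasible latency of each interval; this minimum-feasible-latency sub-problem is solved by a (parametric) linear program over the interval in $O(N^3)$ time, the delicate point being to handle the affine-in-$\eqDelay$ dependence of the congested-road constraint so that a single parameter sweep suffices. Taking the best solution over the $N$ choices of $\meq$ and the $O(|\altruismLevSet|\,N)$ candidate intervals, each an $O(N^3)$ LP, yields $O\!\left(N \cdot |\altruismLevSet| N \cdot N^3\right)=O(|\altruismLevSet|\,N^5)$. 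The step that most needs careful justification is that this finite sampling of breakpoints, together with the minimum-feasible-latency principle, genuinely certifies the global optimum despite both the non-monotonicity in $\meq$ and the bilinear coupling in $\eqDelay$.
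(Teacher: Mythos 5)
Your overall architecture matches the paper's: enumerate $\meq$, restrict $\eqDelay$ to the $O(|\altruismLevSet|N)$ critical values $a_i/\kappa_j$ (together with $a_\meq$), and solve an $O(N^3)$ linear program per candidate, for $O(|\altruismLevSet|N^5)$ total. Your per-candidate LP is in fact a mild simplification of the paper's: the paper uses a two-stage decomposition (an LP that packs as much autonomous flow as possible onto roads $\meqidcs$ at latency $\eqDelay$, followed by a greedy, maximum-flow fill of the free-flow roads in order of increasing $a_i$), whereas you fold everything into one cost-minimizing LP using Theorem~\ref{thm:BANE} to fix the congestion profile; both are valid, since with the profile fixed all latencies are constants and the altruism conditions become cumulative linear inequalities.

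The gap is in your final step, and it is the one you flag yourself. The "minimum feasible latency within each interval via a parametric LP" does not go through as stated: on a congested road the equilibrium condition, after clearing denominators, reads $\eqDelay\, z_i = (\text{affine in } x_i, y_i)$, so once $\eqDelay$ is freed it couples bilinearly with the flows; the minimum-feasible-$\eqDelay$ sub-problem is therefore not a linear program, and you supply no method or complexity bound for the claimed single parameter sweep. The paper avoids this sub-problem entirely with a monotonicity argument that you invoke only partially: within an interval on which the pattern $\varphi(a_i/\eqDelay)$ is constant, decreasing $\eqDelay$ only helps, because the congested roads absorb strictly more flow at a latency cheaper than every free-flow road beyond $\meq$ while the altruism constraints are unchanged; hence the within-interval optimum sits at the left breakpoint whenever that breakpoint is feasible (the pattern intervals are left-closed in $\eqDelay$, since $\varphi(\kappa_j)$ takes the lower value $\varphi_{j-1}$). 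The only way a breakpoint can be infeasible while the interval's interior is feasible is that, at the breakpoint latency, the equilibrium roads would require more total flow than $\barx+\bary$; but then the within-interval optimum occurs exactly where the demand fills roads $\meqidcs$ with nothing left over, i.e.\ at a routing with no altruistically routed flow at all. Such a routing is an ordinary Nash equilibrium, hence weakly dominated by the best-case NE of Theorem~\ref{thm:BNE}, which is itself an ANE and is already among the candidates as $\eqDelay=a_\meq$ for the appropriate $\meq$. With this dominance argument, evaluating the LP at the finitely many breakpoints certifies the global optimum, no parametric LP is needed, and the $O(|\altruismLevSet|N^5)$ count stands. Without it (or some substitute), your enumeration is not yet a proof.
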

\vspace{-4pt}

In the full proof we show that for any given $\meq$, there are a maximum of $|\altruismLevSet|N$ critical points to check for the equilibrium latency, $\eqDelay$. We sketch the algorithm for finding a routing in the set of BANE:
\begin{enumerate}[nosep]
	\item Enumerate through all options for $\meq$.
	\item For each $\meq$, enumerate through the $|\altruismLevSet|N$ critical points of $\eqDelay$.
	\item For each of these combinations, find the routing that maximizes the autonomous flow on roads $\meqidcs$, while incorporating all human flow on these roads. This can be formulated as a linear program. Then find optimal autonomous routing on the remaining roads (order $N$ calculations).
\end{enumerate}

\noindent\textbf{Improvement can be unbounded.} We motivate the schemes described above by demonstrating that without them, aggregate latency can be arbitrarily worse than with them (Fig.~\ref{fig:BANE_BNE_unbound_improve_case_study}). First we show the cost at BNE can be arbitrarily worse than that at BANE, even when autonomous users have arbitrarily low altruism.

\begin{figure}
	\centering
	\vspace*{-18px}
	\includegraphics[width=\linewidth]{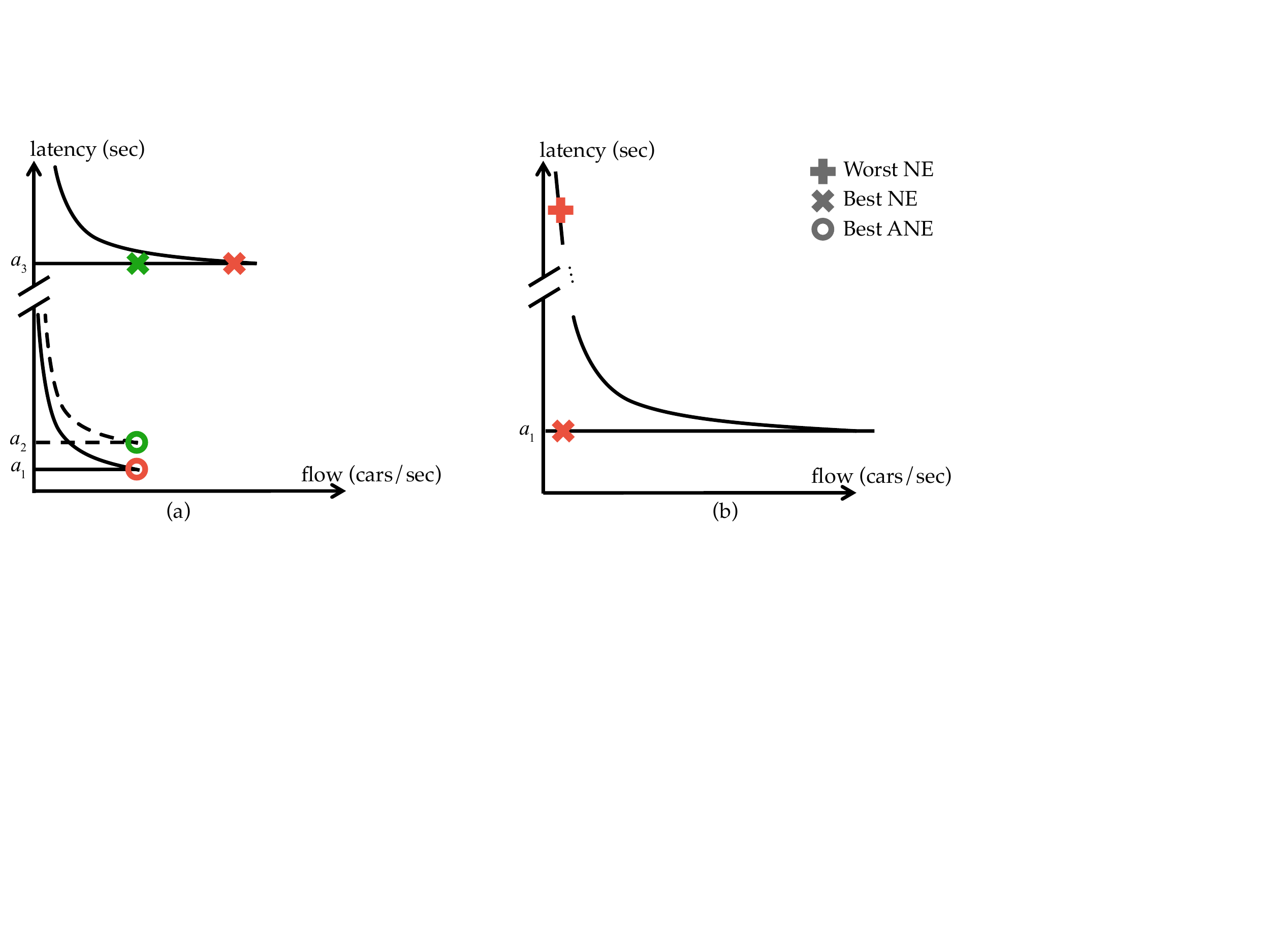}
	\vspace*{-18px}
	\caption{Illustration showing unbounded improvement in (a) altruistic NE vs. best selfish NE, and (b) best selfish NE vs. worst NE. Red and green represent regular and autonomous vehicle flow, +'s are worst-case Nash Equilibrium routing, x's are best-case NE routing, and circles are best-case altruistic NE routings.}
	\vspace*{-18px}
	\label{fig:BANE_BNE_unbound_improve_case_study}
\end{figure}

Consider three roads with free-flow latencies $a_3\!>\!a_2\!>\!a_1$, and human-driven and autonomous flow demands $\barx$ and $\bary$ relative to maximum flows $\zmax_1(\barx,0)=\barx$, $\zmax_2(0,\bary)=\bary$, and $\zmax_3(\barx,\bary) \ge \barx + \bary$. This means human-driven vehicles can fit on road 1 with autonomous vehicles on road 2, or both vehicle types can fit on roads 1, 2, and 3 if all are selfish, as all vehicles cannot fit on roads 1 and 2 at the same latency. Let autonomous vehicles have uniform altruism level $\altruismUnif\!=\! \frac{a_2}{a_1}$, which can be arbitrarily close to $1$. In BANE, autonomous vehicles will be on road 2 and human-driven vehicles on road 1. However in BNE, all flow will be on road 3, as the others are fully congested. Then, $\frac{C^\text{BNE}}{C^\text{BANE}} = \frac{a_3 (\barx + \bary)}{a_1\barx + a_2 \bary} \geq \frac{a_3}{a_2}$, which can be arbitrarily large.

Now consider one road with very large critical density and a small demand $\barx$. This road can exist at free-flow latency at low density or in a highly congested state at high density and serve flow $\barx$ either way. Then, $\lim_{\barx \rightarrow 0^+}\frac{\ell_i(\barx,0,1)}{\ell_i(\barx,0,0)} \rightarrow \infty$, which shows a NE can be arbitrarily worse than a BNE.

\smallspacingsection{Experiments}\label{sct:simulations}
\noindent\textbf{Numerical example of gained utility by BNE and BANE.} 
\begin{wrapfigure}{r}{0.37\textwidth}
	\centering
	\vspace*{-20px}
	\includegraphics[width=\linewidth]{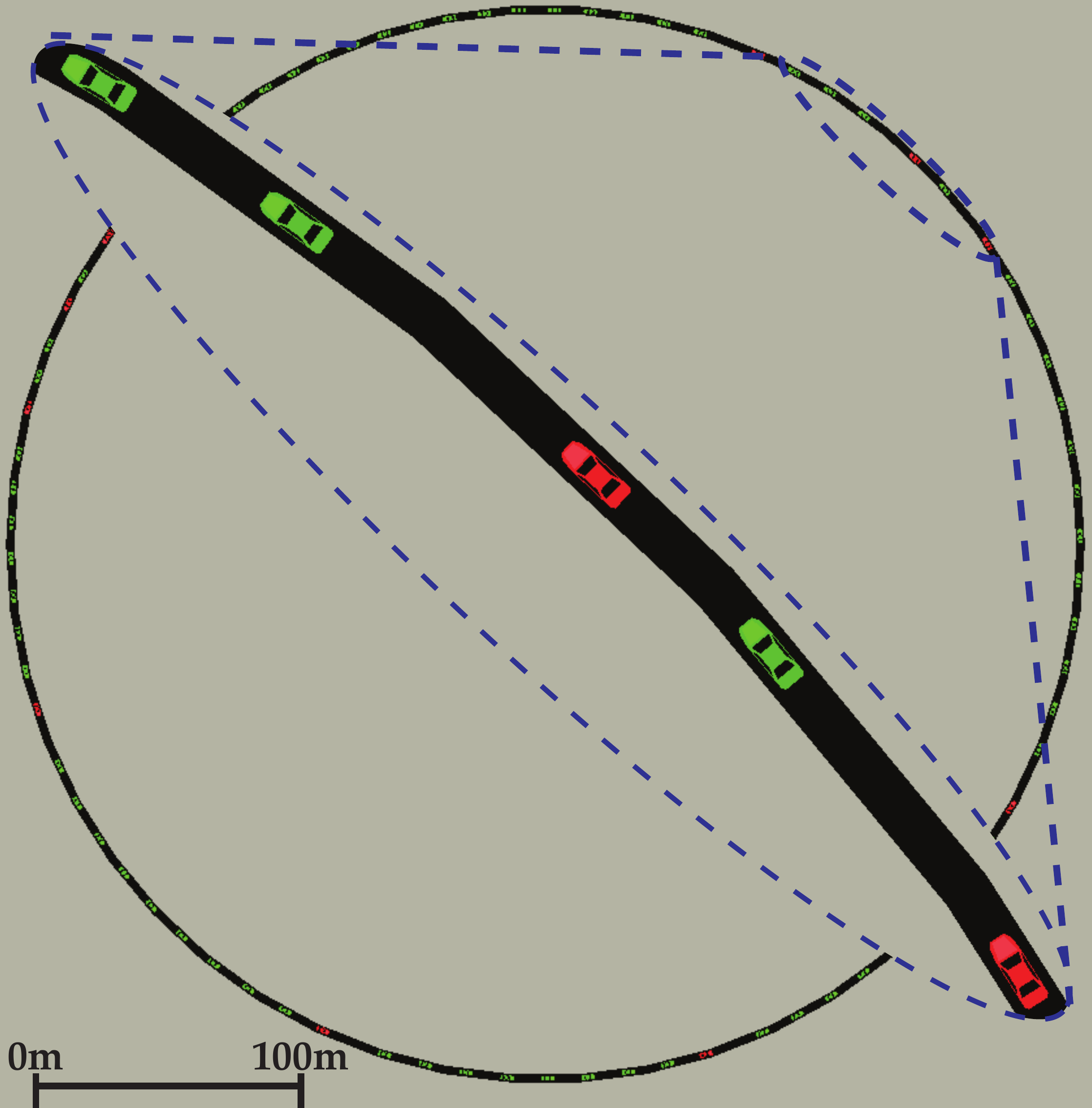}
	\vspace{-16px}
	\caption{The simulation visualization for Road 1. Regular vehicles (red) have larger headway than autonomous ones (green).
	}
	\vspace*{-22pt}
	\label{fig:circular_road}
\end{wrapfigure}
Next, in order to numerically show the utility gained by RBNE (or BNE) against another NE and by altruistic autonomy in a more realistic setting, we create a scenario with two parallel residential roads of length $400\pi$ and $1000\pi$ meters. The speed limit is $13.9$ m/s ($\sim50$ kph). All vehicles are $5$ meters long and have headway of $\max(2,\tau v)$ meters with the vehicle in front, where $v$ represents the speed of the vehicle and $\tau$ is the reaction time ($2$ seconds for regular and $1$ second for autonomous vehicles). We want to allocate the flows of $0.3$ regular and $0.3$ autonomous vehicles per second.

We simulated this scenario using the traffic simulator SUMO \cite{krajzewicz2012recent}. To conveniently simulate the congestion effect, we designed circular and single-lane roads (Fig.~\ref{fig:circular_road}). We employed the original Krauss car following model for driver behavior \cite{krauss1997metastable}. We solved the optimizations using the CVX framework \cite{cvx, gb08}. For comparison, we also computed and simulated a congested NE.
\begin{wraptable}{r}{0.65\textwidth}
	\centering
	\vspace{-22px}
	\caption{Effect of Altruistic Autonomy}
	\vspace{-10px}
	\label{tab:simulation2}
	\begin{tabular}{@{}lccc@{}} 
		\toprule
		& \textbf{NE} & \textbf{RBNE} & \textbf{BANE }($\altruismUnif\!=\!2.5$)\\
		\midrule
		($x_1,y_1$) & \cellcolor{gray!50}($0.006, 0.252$) & \cellcolor{gray!50}($0.3,0.031$) & ($0.3,0.215$) \\ 
		($x_2,y_2$) & \cellcolor{gray!50}($0.294$,$0.048$) & ($0,0.269$) & ($0,0.085$) \\
		$C_T(\mathbf{x},\mathbf{y})$ & $324$ & $135.608$ & $61.5$ \\
		$C_S(\mathbf{x},\mathbf{y})$ & $297.052$ & $135.87$ & $68.64$ \\
		\bottomrule
	\end{tabular}
	\vspace*{-15px}
\end{wraptable}

In both calculations and simulations, \textbf{the BNE approximately halves the total cost compared to congested NE, and the BANE halves it again} (Table~\ref{tab:simulation2}). The altruism level is $\altruismUnif\!=\!2.5$. Gray cells represent congested roads. Subscripts $T$ and $S$ denote if the presented result is theoretical or simulated, respectively. Fig.~\ref{fig:simulation2} visualizes the solutions over flow-latency graph.
\begin{wrapfigure}{r}{0.35\textwidth}
	\centering
	\vspace*{-20px}
	\includegraphics[width=0.37\textwidth]{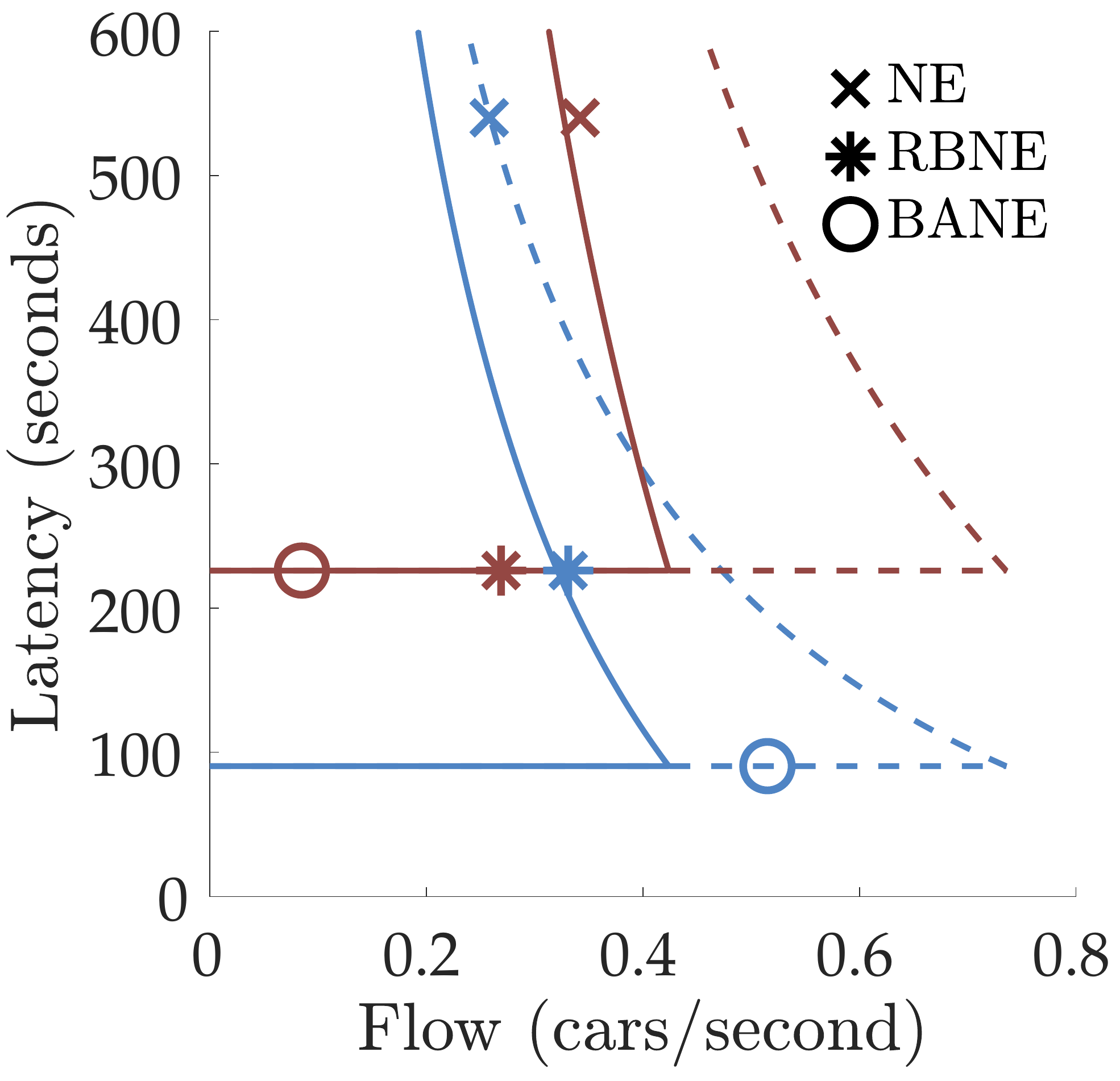}
	\vspace*{-18px}
	\caption{Three solutions to the scenario with two residential roads are visualized. Blue lines represent the shorter road, whereas brown is the longer one. Solid and dashed lines correspond to fully regular and fully autonomous configurations, respectively.
	}
	\vspace{-24px}
	\label{fig:simulation2}
\end{wrapfigure}

\noindent\textbf{Simulation of a 4-parallel-road network.} To show the properties of all the equilibria we defined, we simulated the following comprehensive scenario: We assume there are 4 parallel roads with different lengths from one point to another, two of which are residential roads where the speed limit is $13.9$ m/s, and the other two are undivided highways where the limit is $25.0$ m/s ($\sim\!90$ kph). The residential roads and highways have lengths $400\pi$, $600\pi$, and $800\pi$, $1000\pi$ meters, respectively. Sorting based on the free-flow latencies makes the order of lengths $(400\pi,\!800\pi,\!1000\pi,\!600\pi)$, and we use this order from this point on. We keep all other parameters the same as the previous simulation.

With this configuration, we first simulate Road 4 (residential road, $600\pi$ meters long) alone. The resulting fundamental diagram and the relationship between vehicle flow and latency are shown in Fig.~\ref{fig:simulation} along with the theoretical curves.
There exists a small mismatch between the theory and simulations, which is apparent only for $\zmax$. We conjecture that this is stemming from: \textbf{1) Shape:} We approximated circular roads using pentacontagons (regular polygons with $50$ edges), so the actual road lengths are slightly different. \textbf{2) Initiation:} We cannot initiate the vehicles homogeneously over the polygon. Instead, we initiate from the corners. This hinders our ability to allocate the maximum possible number of vehicles. \textbf{3) Discretization:} As the number of vehicles needs to be an integer, density values are heavily discretized. \textbf{4) Randomness:} Given the total number of vehicles for each road, our simulation initiates regular and autonomous vehicles drawn from a Bernoulli process with the given probability distribution.

To show the effect of changing $\altruismUnif$ in the altruistic case, we consider human flow demand of $0.4$ and autonomous flow demand of $1.2$ vehicles per second. For BANE, we simulated both $\altruismUnif\!=\!1.25$ and $\altruismUnif\!=\!1.5$, where we assume all autonomous vehicles are uniformly altruistic for simplicity.

\begin{figure}
	\centering
	\vspace*{-15px}
	\includegraphics[width=\linewidth]{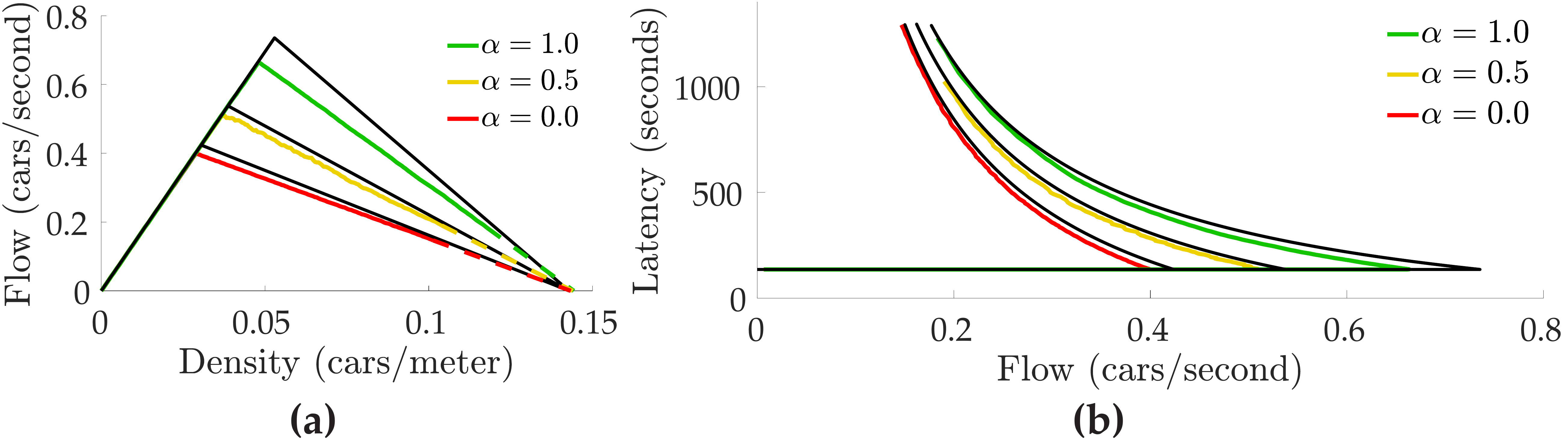}
	\vspace*{-19px}
	\caption{Simulation results of Road 4 for three different $\alpha$ values with theoretical values (black curves). The noisy appearance of the curves for $\alpha\!=\!0.5$ is due to randomness, while mismatches on $\rhocrit$ are due to the other imperfections. \textbf{(a)} The fundamental diagram of traffic. Due to simulation incapabilities regarding initiation, we cannot simulate
heavily congested regions. We used more lightly congested traffic measurements to project heavily congested cases (dashed lines). \textbf{(b)} The flow and latency relationship.}
	\vspace*{-21px}
	\label{fig:simulation}
\end{figure}

We then simulate the computed flow values using SUMO. We found that often the full calculated flow cannot be allocated in the simulation because of the mismatch between the theoretical maximum flow on a road and the simulated maximum flow (Fig.~\ref{fig:simulation}~(b)). In these cases, we kept $\alpha$-value for that road fixed and allocated flow with respect to Nash equilibrium. Hence, for BNE, RBNE and BANE, we added the remaining flow into road $\mstareq$, without breaking Nash equilibrium. For NE, we again kept $\alpha$ and the number of utilized roads fixed and swept latency values to satisfy flow conservation. These simulation results, as well as theoretically computed values for $\mathbf{x}$, $\mathbf{y}$, $\beta$, $C(\mathbf{x}, \mathbf{y})$ are given in Table~\ref{tab:simulation}.

\begin{table}
	\centering
	\vspace{-26px}
	\caption{Theoretical and Simulated Results}
	\label{tab:simulation}
	\begin{tabular}{@{}lccccc@{}} 
		\toprule
		& \textbf{NE} & \textbf{BNE} & \textbf{RBNE} & \textbf{BANE }($\altruismUnif\!=\!1.25$) & \textbf{BANE }($\altruismUnif\!=\!1.5$)\\
		\midrule
		($x_1,y_1$) & \cellcolor{gray!50}($0.036,0.277$) & \cellcolor{gray!50}($0.075,0.52$) & \cellcolor{gray!50}($0.391,0$) & \cellcolor{gray!50}($0.4,0.024$) & ($0.4,0.041$) \\ 
		($x_2,y_2$) & \cellcolor{gray!50}($0.121,0.311$) & \cellcolor{gray!50}($0.2,0.43$) & \cellcolor{gray!50}($0.009,0.772$) & ($0,0.833$) & ($0,0.833$) \\
		($x_3,y_3$) & \cellcolor{gray!50}($0.161,0.303$) & ($0.126,0.25$) & ($0,0.428$) & ($0,0.343$) & ($0,0.325$) \\
		($x_4,y_4$) & \cellcolor{gray!50}($0.083,0.309$) & ($0,0$) & ($0,0$) & ($0,0$) & ($0,0$) \\
		$C_T(\mathbf{x},\mathbf{y})$ & $640$ & $201.062$ & $201.062$ & $169.469$ & $164.56$ \\
		$C_S(\mathbf{x},\mathbf{y})$ & $599.072$ & $199.985$ & $199.575$ & $172$ & $167.035$ \\
		$\beta_T$ & $0$ & $0.183$ & $0.210$ & $0$ & $0$ \\
		$\beta_S$ & $0$ & $0.154$ & $0.135$ & $0$ & $0$ \\
		\bottomrule
	\end{tabular}
	\vspace*{-18px}
\end{table}

Note that BNE and RBNE lead to lower latencies, with a slight difference in the simulation, compared to the presented NE configuration. By introducing altruistic autonomous vehicles, the overall cost can be further decreased.

While theoretical values conform with the fact that RBNE maximizes the robustness with the same cost as other BNE's, the simulated robustness for the RBNE case is smaller than that of the presented BNE solution. To understand this, we first note that the RBNE solution makes the free-flow road with the lowest latency (road 3) fully autonomous to be able to allocate more flow. However, as can be seen from Fig.~\ref{fig:simulation}, the mismatch between theory and simulations grows larger with increasing autonomy. This causes RBNE to be unable to allocate high prospective flow. A better maximum flow model would resolve this.

\begin{figure}
	\centering
	\vspace*{-16px}
	\includegraphics[width=\textwidth]{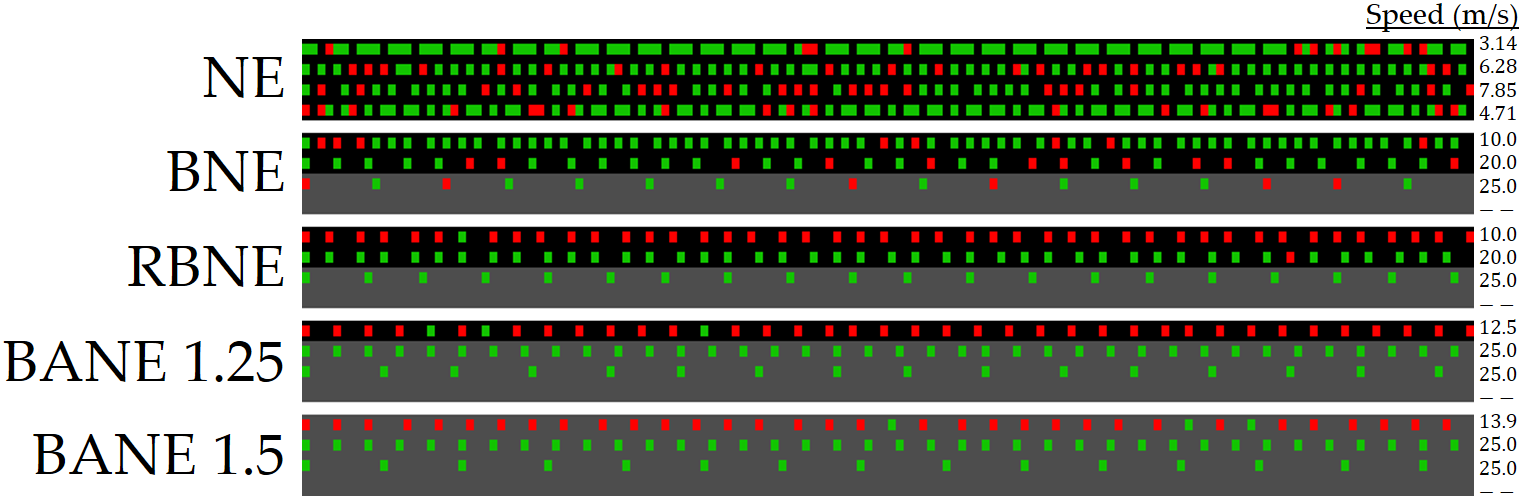}
	\vspace*{-16px}
	\caption{Roads are visualized with respect to the vehicle density values obtained from each of the solutions. Red and green dots represent regular and autonomous vehicles, respectively. We do not show headways for simplicity. Black roads are congested, while gray roads are in free-flow. On the right, average vehicle speeds (m/s) are shown.
	}
	\vspace*{-18px}
	\label{fig:road_visuals}
\end{figure}

Fig.~\ref{fig:road_visuals} shows the average speeds of vehicles on the roads and visualizes the vehicle densities for each of the theoretically presented solutions. The congested roads are black and the free-flow roads are gray. This makes clear that the NE solution leads to high latency due to heavy traffic congestion. Other solutions support the same flows with lower vehicle densities thanks to maintaining higher speeds, yielding lower latencies. Further, the RBNE makes the free-flow road fully autonomous to enable the allocation of more additional flow. Lastly, increasing the altruism level in BANE reduces the overall traffic congestion.\footnote{An animated version can be found at \url{http://youtu.be/Hy2S6zbL6Z0} with realistic numerical values for densities, headways, car lengths, speeds, etc.}
\begin{wrapfigure}{r}{0.50\textwidth}
	\centering
	\vspace*{-20px}
	\hspace*{-4pt}
	\includegraphics[width=0.51\textwidth]{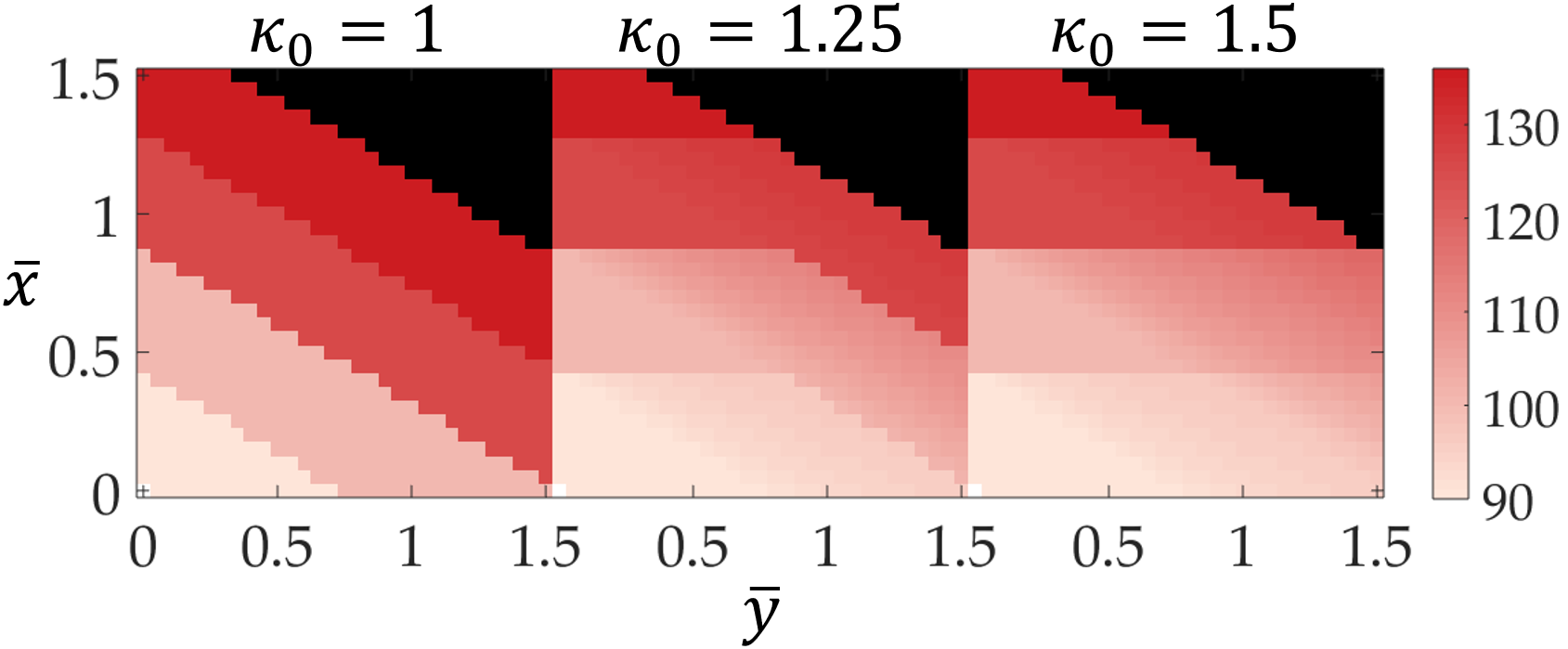}
	\vspace*{-18px}
	\caption{Average latency (seconds) obtained from BANE optimization for $\bar{x},\!\bar{y}\!\in\![0,\!1.5]$ cars per second and $\altruismUnif\!\in\!\{1,\!1.25,\!1.5\}$. Black regions represent infeasible demand.
	}
	\vspace*{-20px}
	\label{fig:heatmaps}
\end{wrapfigure}

\noindent\textbf{Effect of different altruism levels.} We also analyzed the effect of altruism level under different flow constraints. Fig.~\ref{fig:heatmaps} shows the average latency experienced by each vehicle for three different altruism levels and under different total flow requirements. We solved all cases using BANE optimization. The first heat map shows non-altruistic case where the average latencies are exactly equal to one of the free-flow latencies. This is a direct result of Theorem~\ref{thm:BNE}. In the altruistic cases, the average latency experienced by vehicles is lower and increasing the altruism level helps decrease the latency further.

\smallspacingsection{Discussion}\label{sct:discussion}
\noindent\textbf{Summary.} In this work, we develop a new model to incorporate the effect of autonomy in parallel road networks based on the fundamental diagram of traffic. We further define a notion of robustness indicating the capability of a Nash Equilibrium to be resilient to additional unforeseen flow demand. We design an optimization-based polynomial-time algorithm to find a robust Nash Equilibrium that minimizes overall latency.
We then define the concept of \emph{altruistic autonomy} to model autonomous users' willingness to accept higher latencies. We provide a polynomial-time algorithm that utilizes altruism to minimize overall latency. We demonstrate the improvements our algorithms provide using simulations.

\noindent\textbf{Limitations and Future Directions.} In this work we consider parallel networks; we wish to also study more complex transportation networks that include other modes of transportation. Congestion and altruism can be modeled for each mode of transportation. Further, in this work we use the original Krauss model \cite{krauss1997metastable} for car following; more complex and expressive models, such as Intelligent Driver Model \cite{treiber2000congested}, could yield to other interesting outcomes. This work also considers only isolated equilibria -- investigating how to move the network from one equilibrium to another is a valuable direction. Finally, our work is a first step in formalizing altruistic autonomy. We would like to perform more realistic case studies to understand emergent phenomenon in mixed-autonomy roads.

\vspace*{-12px}\section*{Acknowledgments}\vspace*{-8px}
Toyota Research Institute ("TRI") provided funds to assist the authors with their research but this article solely reflects the opinions and conclusions of its authors and not TRI or any other Toyota entity. This work was also supported by NSF grant CCF-1755808 and UC Office of the President grant LFR-18-548175.

%
%
\vspace*{-12pt}

\smallspacingsection{Supplementary Materials}

In this section, we first provide the proofs of theoretical results discussed in the main body of the paper. We then provide two case studies that illustrate properties of the set of Best-case Altruistic Nash Equilibria.
\subsection{Proof of Theoretical Results}
We first present a Lemma necessary for proving Theorem~\ref{thm:BNE}. We then provide a proof for the Lemma, then provide proofs for Theorems ~\ref{thm:BNE}, \ref{thm:BANE}, \ref{thm:compute_RBNE}, and \ref{thm:compute_BANE}.

\begin{lemma}\label{lma:free_flow}
	If the set of Nash Equilibria contains a routing with positive flow only on roads $\midcs$, then there exists a routing in the set of Nash Equilibria with positive flow only on roads $\mprimeidcs$ where $m' \le m$, and road $m'$ is in free-flow. 
\end{lemma}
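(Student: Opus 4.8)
The plan is to start from the given equilibrium and construct the free-flow-topped equilibrium directly by identifying the correct longest used road. Without loss of generality I take road $m$ itself to be the longest road carrying positive flow (otherwise relabel $m$ as that index, which only strengthens the conclusion $m'\le m$), and I let $\eqDelay\ge a_m$ denote the common equilibrium latency. The key observation is a feasibility consequence of the ordering: since roads are indexed by increasing free-flow latency, any \emph{unused} road $i<m$ would satisfy $a_i<a_m\le\eqDelay$, so a user could strictly improve by deviating onto it; hence if $\eqDelay>a_m$, every road in $\midcs$ must in fact be used and congested. If instead $\eqDelay=a_m$, then road $m$ already lies at its free-flow point and the lemma holds with $m'=m$. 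So the substantive case is $\eqDelay>a_m$, with all of $1,\dots,m$ congested.

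Next I would pin down the topology using the congested-flow map. On a congested road $i$ at latency $\ell\ge a_i$ I record the total flow $z_i^{\mathrm{cong}}(\ell)$ it carries; this is continuous and strictly decreasing in $\ell$, with $z_i^{\mathrm{cong}}(a_i)=\zmax_i$ where the congested branch meets the free-flow point. For a candidate longest used road $m'$, a free-flow-topped configuration congests roads $1,\dots,m'-1$ at latency $a_{m'}$ and places road $m'$ in free-flow, so its total flow ranges over $[G(m'),H(m')]$ with $G(m')=\sum_{i<m'}z_i^{\mathrm{cong}}(a_{m'})$ and $H(m')=G(m')+\zmax_{m'}$. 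Writing $\bar z:=\barx+\bary$, the given equilibrium gives $\bar z=\sum_{i\le m}z_i^{\mathrm{cong}}(\eqDelay)$, and monotonicity (with $\eqDelay\ge a_m$) yields $\bar z\le\sum_{i\le m}z_i^{\mathrm{cong}}(a_m)=H(m)$. I would then pick $m'$ to be the smallest index with $\bar z\le H(m')$; this forces $m'\le m$, and a short comparison using $a_{m'}>a_{m'-1}$ shows $G(m')<H(m'-1)<\bar z$ when $m'>1$, so $G(m')\le\bar z\le H(m')$ and the free-flow mass $\bar z-G(m')$ lies in $[0,\zmax_{m'}]$. The resulting routing is automatically a Nash Equilibrium: all used roads sit at latency $a_{m'}$, and every unused road $i>m'$ has $a_i>a_{m'}$, ruling out profitable deviations.

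The remaining, and in my estimation hardest, step is to realize exactly the \emph{composition} $(\barx,\bary)$ rather than merely the total $\bar z$ — the feature absent from the single-type analysis of Krichene et al. In mixed autonomy the quantities $z_i^{\mathrm{cong}}(a_{m'})$ and $\zmax_{m'}$ are not single numbers: each congested road at latency $a_{m'}$ admits a one-parameter family of splits $(x_i,y_i)$ indexed by its autonomy level $\alpha_i$ (raising $\alpha_i$ increases $\rhocrit_i$ and hence the flow sustainable at that latency), and road $m'$ in free-flow likewise admits a range of splits subject to $x_{m'}+y_{m'}\le\zmax_{m'}(x_{m'},y_{m'})$. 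Consequently the total-flow bracketing above must be carried out jointly with the split, so the clean scalar argument becomes a two-dimensional one: I would argue that the set of attainable aggregate pairs $(\sum_i x_i,\sum_i y_i)$ over all free-flow-topped configurations on $\mprimeidcs$ is a connected region containing $(\barx,\bary)$, via an intermediate-value argument that sweeps the per-road autonomy levels and is anchored at the composition already realized by the given equilibrium (which certifies that the target point is attainable for the total and serves as a starting point from which to slide autonomous mass across roads until both totals match). Establishing monotonicity of $\zmax_i$ in $\alpha_i$, nonnegativity of the per-road splits throughout the sweep, and connectedness of this feasible region is where I expect essentially all of the technical effort to lie.
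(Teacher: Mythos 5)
Your scalar skeleton (the bracketing $G(m')\le\barx+\bary\le H(m')$ and the choice of the smallest feasible $m'$) is sound for a single vehicle type, but, as you yourself flag, the statement you actually need is two-dimensional: the free-flow-topped configuration on roads $\mprimeidcs$ must route exactly the pair $(\barx,\bary)$, and in mixed autonomy the flow a congested road carries at a given latency depends on its autonomy level, so $z_i^{\mathrm{cong}}(\ell)$ and $\zmax_{m'}$ are not well-defined scalars. The connectedness/intermediate-value sweep you propose to repair this is a plan, not a proof: you never exhibit the sweep, verify nonnegativity of the per-road splits along it, or establish that the attainable set of aggregate pairs $\bigl(\sum_i x_i,\sum_i y_i\bigr)$ is connected and contains $(\barx,\bary)$. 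Since by your own account essentially all the technical effort lies there, the proposal has a genuine gap precisely at the step that distinguishes this lemma from the single-type result of Krichene et al.

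The paper avoids the problem entirely by never separating ``total'' from ``composition'': it starts from the given equilibrium routing and continuously re-routes it. Transfer flow (of whichever types road $m$ carries) from road $m$ onto roads ${[}m-1{]}$, keeping the latencies of roads ${[}m-1{]}$ equal to one another as they decrease together; this is possible because congested latency decreases continuously in both $x_i$ and $y_i$. Either those latencies reach $a_m$ while road $m$ still carries flow, in which case road $m$ is placed in free-flow (feasible, since any congested flow is below the road's maximum free flow) and the lemma holds with $m'=m$; or road $m$ empties first, and the argument recurses with traffic moved from road $m-1$ onto roads ${[}m-2{]}$, terminating at worst with a single road, which can carry any feasible demand in free-flow. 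Because vehicles are only moved between roads, the demand $(\barx,\bary)$ is conserved throughout, so the composition issue never arises. To rescue your route you would have to carry out the two-dimensional feasibility argument explicitly; the paper's deformation argument is both shorter and sidesteps it.
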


\begin{proof}
	Consider a routing at Nash Equilibrium with positive flow on roads $\midcs$. Assume road $m$ is congested, as otherwise the lemma would be satisfied with $m'=m$. Note that in the congested regime, latency decreases smoothly as the volume of regular or autonomous cars increases. Accordingly, we consider transferring flow from road $m$ to the other roads in a way such that the latencies on roads ${[}m-1{]}$ stay equal to each other (and decrease together). We continue this until one of two things occurs:
	\begin{enumerate}
		\item the latencies on roads ${[}m-1{]}$ are reduced to $a_m$, the free-flow latency on road $m$, or
		\item all flow is transferred off road $m$ and the latencies on roads ${[}m-1{]}$ are greater than $a_m$.
	\end{enumerate}
	In the first case, the lemma is satisfied with $m'=m$. In the second case, we begin the process again, moving traffic from road $m-1$ to roads ${[}m-2{]}$. This continues until either we achieve case 1 above or until we are reduced to a single road. If that occurs, traffic can be routed in free-flow on that road, since any feasible flow in the congested regime is less than the maximum free-flow on a road. 
\end{proof}

\noindent\textbf{Proof of Theorem~\ref{thm:BNE}.} The definition of Nash Equilibrium and the fact that latency on a road is always greater than or equal to its free-flow latency together imply that at Nash Equilibrium, if road $m$ has positive flow then all roads with free flow latency less than $a_m$ have positive flow as well. Further, we use Lemma \ref{lma:free_flow} to show that all routings in the set of BNE will have one road in free-flow. Assume for the purposes of contradiction that we have a routing in the set of BNE in which only roads $\midcs$ have positive flow and all are congested, with equilibrium latency $\eqDelay$. The total cost is then $\eqDelay(\barx + \bary)$, where $\eqDelay > a_m$. By Lemma \ref{lma:free_flow}, another routing exists in the set of NE which uses roads $\mprimeidcs$, where $m' \le m$ and road $m'$ is in free flow. The cost of this equilibrium is $a_{m'}(\barx + \bary) \le a_m(\barx + \bary) < \eqDelay(\barx + \bary)$ contradicting our premise.
	
So far we have established that all routings in the set of BNE will have one road in free-flow and all roads with lower free-flow latency will be congested. All routings in the set of BNE will have the same free-flow road, which has the lowest free-flow latency out of all roads at free-flow in the set of NE. This is because all traffic experiences delay equal to the free-flow latency of this road and cost of a routing is strictly decreasing with the free-flow delay of this road, and we assume no two roads have the same free-flow latency. 

\noindent\textbf{Proof of Theorem~\ref{thm:BANE}.} The first property directly follows from Theorem \ref{thm:BNE}, as the regular vehicles have to be at a Nash equilibrium due to selfishness.
	
To prove the second property, we note that for roads that have higher latencies than road $\mstareq$, a Nash equilibrium is not necessary thanks to altruism. As $\ell_i(x_i,y_i,s_i)$ is a non-increasing continuous function of $y_i$ and decreasing for $s_i=1$, roads that have higher latencies than road $\mstareq$ will always be in-free flow.

Now we assume some of the roads with indices greater than $\mstareq$ and less than $\mstarall$ are in free-flow, but not at maximum flow. Then we could simply transfer some of the flow from the road $\mstarall$ to those roads and have lower overall costs, which is a contradiction. This completes the proof for the third property.

\noindent\textbf{Proof of Theorem~\ref{thm:compute_RBNE}.} Since road $\meq$, the longest equilibrium road, is in free-flow by Theorem \ref{thm:BNE} and $\mstareq$ is the minimum feasible $\meq$, our solution is restricted to the set of BNE. Then maximizing robustness ensures we find a routing in the set of RBNE. Accordingly, we can restrict our robustness maximization to routing in which the longest equilibrium road is in free-flow. This allows us to write an optimization equivalent to maximizing robustness:
\begin{align}
&\max_{\bx, \by \in \mathbb{R}^N_{\ge 0}, \gamma \ge 0} \gamma \nonumber \\
&\quad \text{s.t.} \sum_{i \in \mstareqidcs}x_i = \barx  \nonumber \\
&\qquad \; \, \sum_{i \in \mstareqidcs}y_i = \bary \nonumber \\
&\qquad \; \, \ell_i(x_i,y_i,1) = a_\mstareq \; \forall i \in {[}m-1{]} \label{eq:NE_constraint} \\
&\qquad \; \, x_\mstareq + y_\mstareq + \gamma(\barx + \bary) \le \zmax_\mstareq(x_\mstareq + \gamma \barx, y_\mstareq + \gamma \bary) \label{eq:feas_free_flow_constraint}
\end{align}

Since we are maximizing $\gamma$, constraint \eqref{eq:feas_free_flow_constraint} will be tight, allowing us to find $\gamma$ in terms of $x_\mstareq$ and $y_\mstareq$: 
\begin{align*}
	\gamma = f(x_\mstareq,y_\mstareq):=\frac{v^f_\mstareq - (h+L)x_\mstareq - (\bar{h} + L)y_\mstareq}{(h+L)\barx + (\bar{h} + L)\bary} \; ,
\end{align*}
which is affine in ${[}x_\mstareq,y_\mstareq{]}^T$. Further, for the non free-flow roads, we use \eqref{eq:NE_constraint} to find an expression for $y_i$ in terms of $x_i$ which we plug back into \eqref{eq:NE_constraint}, yielding new constraints $g_i(x_i,y_i)=0$, where $g_i$ is affine in ${[}x_i,y_i{]}^T$. This gives us the following equivalent optimization:
\begin{align*}
&\max_{\bx, \by \in \mathbb{R}^N_{\ge 0}} f(x_\mstareq,y_\mstareq) \\
&\quad \text{s.t.} \sum_{i \in \midcs}x_i = \barx \\
&\qquad \, \sum_{i \in \midcs}y_i = \bary \\
&\qquad \, g_i(x_i,y_i) = 0 \; \forall i \in {[}\mstareq-1{]} \\
&\qquad \, f(x_\mstareq,y_\mstareq) \ge 0
\end{align*}

As this is a linear program, it can be solved in $O(N^3)$ time. Finding $\mstareq$ can be done with a search in $O(N)$ time.

\noindent\textbf{Proof of Theorem~\ref{thm:compute_BANE}.} First we prove that \eqref{opt:BANE} finds a routing in the set of BANE by demonstrating the monotonicity of the cost of routing in the latency on road $\meq$. We then show that in light of this, for each choice of $\meq$ there are a maximum of $|\altruismLevSet|N$ choices of $\eqDelay$, the equilibrium latency, to consider. The associated optimal social cost of each requires $O(N^3)$ computations to evaluate.

First consider an inner maximization, which seeks to fit as much autonomous flow on roads $\meqidcs$ as possible, all experiencing latency $\eqDelay$. Let $\bx^{\text{EQ}}$ and $\by^{\text{EQ}}$ denote the elements of the regular and autonomous routings $\bx$ and $\by$ that correspond to flows on the roads $\meqidcs$, as with $\bs^\text{EQ}$. Then, solve
\begin{equation}\label{opt:max_aut_on_meq_roads}
\begin{aligned}
& \argmax_{\bx^{\text{EQ}}, \by^{\text{EQ}} \in \mathbb{R}^{|\meqidcs|},\bs \in \{0,1\}^{|\meqidcs|} }
& & \sum_{i\in \midcs}y_i
& & \\
& \qquad \quad \text{s.t.}
& & \sum_{i\in \meqidcs}y_i = \bary 
& & \\
& 
& & x_i \ge 0, y_i \ge 0 
& & \forall i \in \meqidcs \\
&
& & \ell_i(x_i,y_i,1)=\eqDelay 
& &  \forall i \in {[}\meq-1{]} \\
& 
& & \ell_\meq(x_\meq,y_\meq,s_\meq) = \eqDelay \; .
& &
\end{aligned}
\end{equation}

Using reasoning similar to that in the proof of Theorem \ref{thm:compute_RBNE}, this can be formulated as a linear program and therefore can be solved with a computational complexity $O(N^3)$ \cite{gonzaga1992path}.

We also consider an optimization which computes how many free-flow roads are required to route the autonomous traffic that does not fit on roads $\meqidcs$:
\begin{equation}\label{opt:num_ff_roads_BANE}
\begin{aligned}
& \argmin_{j\in\nidcs \backslash {[}\meq-1{]}}
& & \quad j \\
& \qquad \quad \text{s.t.}
& & \sum_{i\in \meqidcs}y_i + \sum_{i \in {[}j{]} \backslash \meqidcs}\zmax_i(0,1)\ge \bary \; ,
\end{aligned}
\end{equation}
which is an optimization that requires computations of order $O(N)$.

We temporarily restrict our attention to the case in which autonomous users have a uniform autonomy level. We wish to optimize over the following decision variables:
\begin{equation*}
\begin{aligned}
& \meq \in \nidcs
& & \text{which road is longest equilibrium road?} \\
& \mall \in \midcs \backslash {[}\meq-1{]} 
& & \text{which road is longest used road?} \\
& \eqDelay \in {[}a_\meq,a_{\meq+1}{)} 
& & \text{what is the equilibrium latency?} \\
& \bx,\by \in \mathbb{R}^n_{\ge 0}, \bs \in \{0,1\}^n 
& & \text{what is the actual routing?}
\end{aligned}
\end{equation*}

The objective function to be minimized is aggregate latency, as follows:
\begin{align}
& \eqDelay \sum_{i \in \meqidcs}(x_i+y_i) + \sum_{i\in {[}\mall-1{]} \backslash \meqidcs}a_i \zmax_i(0,1) + a_\mall(\bary-\sum_{i \in {[}\mall-1{]}}y_i) \nonumber \\ 
\text{s.t.} & (\bx^{\text{EQ}}, \by^{\text{EQ}}, \bs^{\text{EQ}}) \in \eqref{opt:max_aut_on_meq_roads} \label{eq:max_aut_flow} \\
& \mall = \eqref{opt:num_ff_roads_BANE} \label{eq:feas_longest_road} \\
& \frac{1}{\bary} \left( \sum_{i \in \meqidcs}y_i + \sum_{i \in {[}j{]} \backslash \meqidcs} \zmax_i(0,1) \right) \ge \altruismFn(\frac{a_i}{\eqDelay}) \; \forall \; j \in \nidcs \backslash {[}\meq-1{]} \label{eq:altruism}
\end{align}

Here, constraint \eqref{eq:max_aut_flow} corresponds to maximizing autonomous flows on roads $\meqidcs$ given $\ell_\meq$, \eqref{eq:feas_longest_road} corresponds to finding the minimum feasible longest used road (\emph{i.e.} optimally routing altruistic flow), and \eqref{eq:altruism} ensures that no one is more altruistic than they wish.

To solve this, recall that in the case of uniform altruism,
\begin{align*}
\altruismFn(\altruismArg) = \begin{cases}
0 & 0 \le \altruismArg \le \altruismUnif \\
1 & \altruismArg > \altruismUnif \; .
\end{cases}
\end{align*}

Further, the volume of autonomous flow that can fit on roads $\midcs$ increases with decreasing $\ell_m$. Because of this, we can restrict our search of $\eqDelay$ to critical points of the function $\delta$:
\begin{align*}
\eqDelay \in \{a_m\} \cup \{\frac{a_i}{\altruismUnif} : i\in \mallidcs \backslash {[}\meq{]}, a_\meq < \frac{a_i}{\altruismUnif} < a_{\meq+1} \} \; ,
\end{align*}
which is a set with maximum cardinality $N$. Therefore, we can find the BANE via the following algorithm:
\begin{enumerate}
	\item Enumerate through all possible values of $\meq$ ($N$ possibilities).
	\item For each possible $\meq$, enumerate through all possible values of $\eqDelay$ ($N$ possiblilities).
	\item For each combination of $\meq$ and $\eqDelay$, find the optimal routing on roads $\meqidcs$ via \eqref{opt:max_aut_on_meq_roads} (order $N^3$) and find $\mall$ and the optimal routing of autonomous vehicles on the remaining roads via \eqref{opt:num_ff_roads_BANE} (order $N$). Since these are parallel, this step requires computations of order $O(N^3)$.
\end{enumerate}
All together, this requires computations of order $O(N^5)$.

Now consider that autonomous vehicles have nonuniform altruism levels. Then,
\begin{align*}
\altruismFn(\altruismArg) = \begin{cases}
0 & 0 \le \altruismArg \le \altruismArg_0 \\
\varphi_0 & \altruismArg_0 < \altruismArg \le \altruismArg_1 \\
\varphi_1 & \altruismArg_1 < \altruismArg \le \altruismArg_2 \\
\ldots & \\
1 & \altruismArg > \altruismArg_{{|\altruismLevSet|-1}} \; .
\end{cases}
\end{align*}
We therefore must search over $\eqDelay$ in the following set:
\begin{align*}
\eqDelay \in \{a_m\} \cup \{\frac{a_i}{\altruismArg_j} : i\in \mallidcs \backslash {[}\meq{]}, \; j \in \altruismLevSet, \; a_\meq < \frac{a_i}{\altruismArg_j} < a_{\meq+1} \} \; ,
\end{align*}
which has maximum cardinality $|\altruismLevSet|N$, bringing the total computation complexity to $O(|\altruismLevSet|N^5)$.

\subsection{Case Studies}
We now provide two case studies which illustrate properties of Best-case Altruistic Nash Equilibria, specifically showing that two properties of BNE do not hold for BANE.

\subsubsection{Longest equilibrium road may be congested in BANE}\label{sct:BANE_meq_not_ff}

As shown in Theorem \ref{thm:BNE}, all routings in the set of BNE have the longest equilibrium road in free-flow, meaning that the road with largest free-flow latency that has delay equal to that experienced by human drivers is in free-flow. Counterintuitively, this property does not extend to the set of BANE. In this section we provide an example to that effect. 

\begin{figure}
	\centering
	\includegraphics[width=1\linewidth]{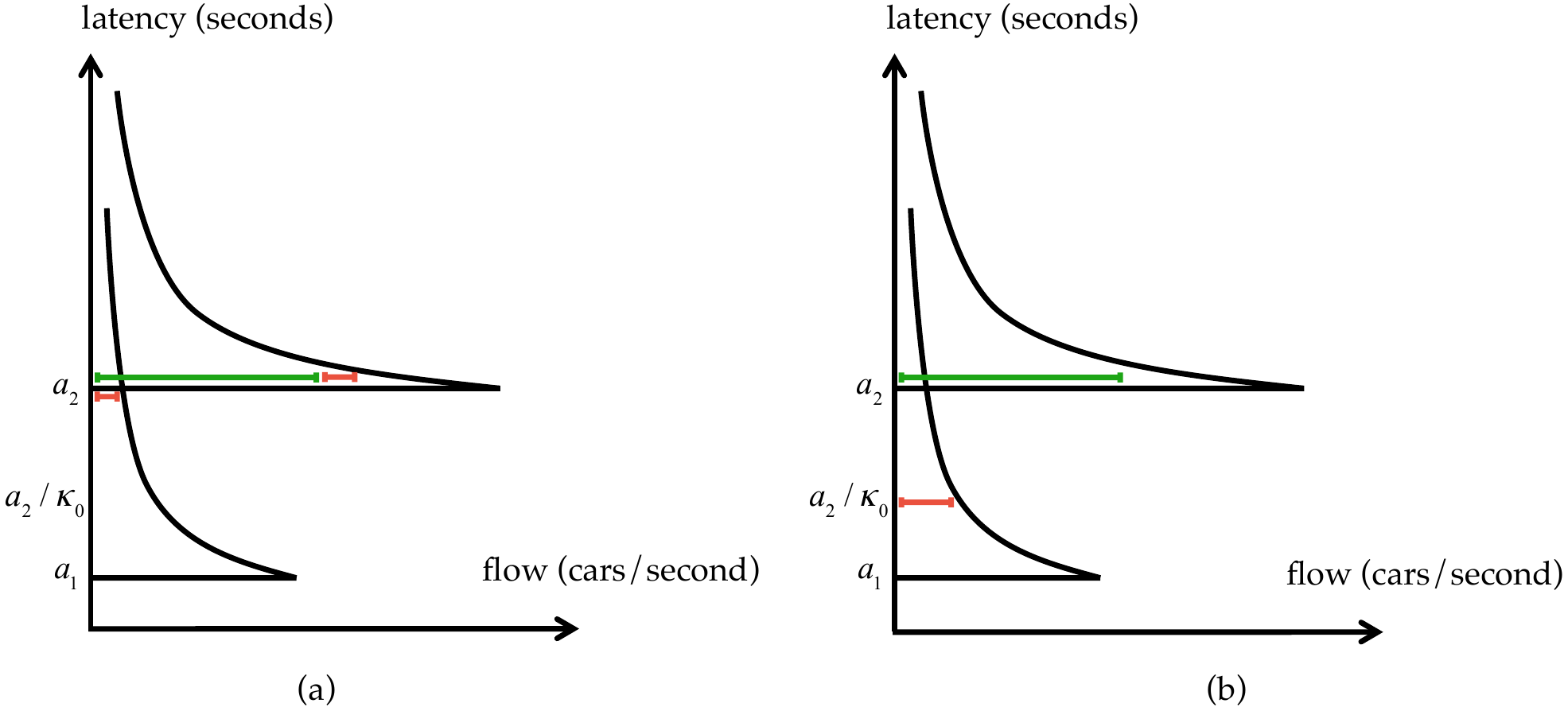}
	\caption{Illustration of a case study showing that the longest equilibrium road may be congested in the BANE. Red and green respectively denote human-driven and autonomous vehicle flow. (a) shows the only feasible routing with longest equilibrium road in free-flow, where humans are split between road 1 and road 2, and all autonomous vehicles are on road 2. (b) shows a more efficient routing, in which road 1 is now the longest equilibrium road, is congested with all human flow on it.}
	\label{fig:BANE_congested_case_study}
\end{figure}

Consider the scenario in Figure \ref{fig:BANE_congested_case_study}, which has a volume $\bary$ of autonomous vehicle flow, with uniform altruism level $\altruismUnif$ and a volume $\barx$ of human drivers. These flows are to be routed on a network of two roads, with free flow latencies $a_1$ and $a_2$ such that $\frac{a_2}{a_1}>\altruismUnif$, meaning that if road 1 is in free-flow, autonomous users will refuse to accept a delay of $a_2$. 

Let these roads have maximum flow that satisfy $\barx + \bary > \zmax_1(\barx,\bary)$ and $\barx + \bary < \zmax_2(\barx, \bary)$, so road 1 cannot accommodate all flow but road 2 can. Further, let the volume of regular traffic be such that $\ell_1(\barx,0,1)=\frac{a_2}{\epsilon_0} < a_2$. This means that if road 1 is congested with the full volume of human-driven flow, autonomous users will be willing to take road 2. However, if road 1 experiences delay less than this, autonomous users will not be willing to take road 2.

With all this in mind, any routing with the longest equilibrium road in free-flow has all traffic experiencing latency $a_2$, for a social cost of $a_2(\barx + \bary)$. No feasible routing can exist with road 1 in free-flow because if road 1 is in free-flow then autonomous users will refuse to take road 2, and road 1 cannot accommodate all flow. However, another feasible routing exists with all humans on road 1, where road 1 is congested, and all autonomous traffic on road 2. This yields a social cost of $\ell_1(\barx,0,1)\barx + a_2\bary$, which is less than the social cost of any equilibrium in which the longest equilibrium road is in free-flow.

\subsubsection{Number of equilibrium roads is not minimized in BANE}\label{sct:BANE_meq_not_min}

Theorem \ref{thm:BNE} shows that for all routings in the set of BNE have the same longest congested road and that the number of roads at equilibrium is minimized (and therefore equilibrium latency is minimized). One may conjecture that this same property applies to routings in the set of BANE, \emph{i.e.} that in the presence of altruism the social cost is minimized when limiting the equilibrium to as few roads as possible, and therefore minimizing the equilibrium latency. In this section we provide an example to disprove this conjecture. 

\begin{figure}
	\centering
	\vspace{-10pt}
	\includegraphics[width=1\linewidth]{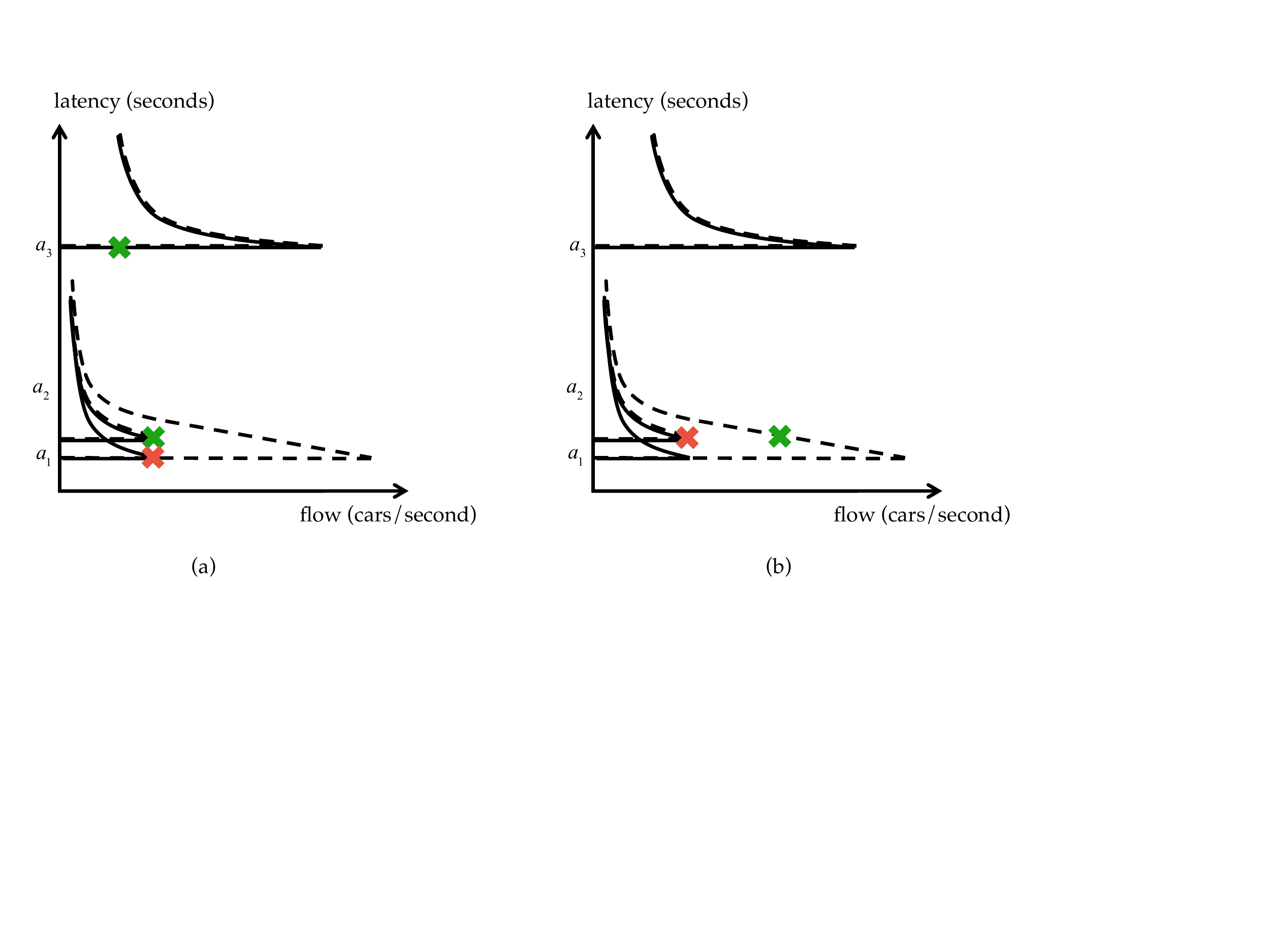}
	\vspace{-12pt}
	\caption{Illustration of a case study showing that the BANE does not necessarily minimize the number of roads at Nash Equilibrium. Red and green respectively denote human-driven and autonomous vehicle flow and solid and dashed lines represent latency functions for human-driven and autonomous flow, respectively. (a) shows that when the longest equilibrium road is road 1, all humans are on road 1, with autonomous flow split between roads 2 and 3. In (b), road 2 is the longest equilibrium road, with autonomous users on road 1 and human-driven vehicles on road 2.}
	\label{fig:not_min_eq_rds_BANE_case_study}
	\vspace{-10pt}
\end{figure}

Consider three roads with free-flow latencies $a_1<a_2<a_3$ and autonomous users with volume $\bary$ and uniform altruism $\altruismUnif>\frac{a_3}{a_1}$, and a volume of $\barx$ human-driven vehicles, with $\barx<\bary$. Let road 1 display large asymmetry in human and autonomous headways, so $\zmax_1(\barx,0) << \zmax_1(0,\bary)$, where $\barx = \zmax_1(\barx,0)$, and $\bary < \zmax_1(0,\bary)$, meaning that human-driven vehicles exactly fill road 1, but autonomous vehicles do not. Further, let $\ell_1(0,\bary,1) = a_2$, so road 1, when congested with all autonomous vehicles, has latency equal to that of road 2. Road 2 on the other hand will display no asymmetry, so $\zmax_2(\barx,0) = \zmax_2(0,\bary)$, and $\zmax_2(\barx,0) = \barx < \bary$. Finally, let road 3 have a very large maximum flow for both autonomous and human-driven vehicles so it can accommodate any flow. Figure \ref{fig:not_min_eq_rds_BANE_case_study} illustrates this scenario. 

With all this in mind, the optimal altruistic routing that minimizing the number of roads in equilibrium involves putting all humans on road 1, and having autonomous users split between roads 2 and 3 for a cost of $a_1 \barx + a_2 \zmax_2(0,\bary) + a_3 (\bary-\zmax_2(0,\bary))$. However, another feasible routing (which does not minimize number of roads in equilibrium) has all autonomous users congested on road 1 and all human drivers on road 2 for cost $a_2(\barx + \bary)$. If the autonomous users are very altruistic, delay on road 3 can be very large, so the latter routing will induce a lower social cost.


\begin{thebibliography}{10}

\bibitem{askari2017effect}
A.~Askari, D. A. Farias et~al., ``Effect of adaptive and cooperative adaptive
  cruise control on throughput of signalized arterials,'' in {\em IEEE
  Intelligent Veh.}, 2017.

\bibitem{lioris2017platoons}
J.~Lioris, R. Pedarsani et~al., ``Platoons of connected vehicles can double
  throughput in urban roads,'' {\em Transportation Res. Part C: Emerging
  Tech.}, vol.~77, 2017.

\bibitem{motie2016throughput}
M.~Motie and K.~Savla, ``Throughput analysis of a horizontal traffic queue
  under safe car following models,'' in {\em IEEE Conference on Decision and
  Control}, 2016.

\bibitem{adler2016optimal}
A.~Adler, D.~Miculescu, and S.~Karaman, ``Optimal policies for platooning and
  ride sharing in autonomy-enabled transportation,'' in {\em Workshop on
  Algorithmic Foundations of Robotics (WAFR)}, 2016.

\bibitem{wu2017emergent}
C.~Wu, A.~Kreidieh, E.~Vinitsky, and A.~M. Bayen, ``Emergent behaviors in
  mixed-autonomy traffic,'' in {\em Conference on Robot Learning},
  pp.~398--407, 2017.

\bibitem{wu2018stabilizing}
C.~Wu, A.~M. Bayen, and A.~Mehta, ``Stabilizing traffic with autonomous
  vehicles,'' in {\em 2018 IEEE International Conference on Robotics and
  Automation (ICRA)}, pp.~1--7, IEEE, 2018.

\bibitem{cui2017stabilizing}
S.~Cui, B.~Seibold, R.~Stern, and D.~B. Work, ``Stabilizing traffic flow via a
  single autonomous vehicle: Possibilities and limitations,'' in {\em IEEE
  Intelligent Veh.}, 2017.

\bibitem{stern2018dissipation}
R.~E. Stern {\em et~al.}, ``Dissipation of stop-and-go waves via control of
  autonomous vehicles: Field experiments,'' {\em Transportation Res. Part C:
  Emerging Tech.}, 2018.

\bibitem{merugu2009incentive}
D.~Merugu, B.~S. Prabhakar, and N.~Rama, ``An incentive mechanism for
  decongesting the roads: A pilot program in bangalore,'' in {\em Proc. of ACM
  NetEcon Workshop}, Citeseer, 2009.

\bibitem{pluntke2013insinc}
C.~Pluntke and B.~Prabhakar, ``Insinc: A platform for managing peak demand in
  public transit,'' {\em JOURNEYS, Land Transport Authority Academy of
  Singapore}, 2013.

\bibitem{nie2013managing}
Y.~M. Nie and Y.~Yin, ``Managing rush hour travel choices with tradable credit
  scheme,'' {\em Transportation Research Part B: Methodological}, 2013.

\bibitem{DAGANZO1994269}
C.~F. Daganzo, ``The cell transmission model: A dynamic representation of
  highway traffic consistent with the hydrodynamic theory,'' {\em
  Transportation Res. Part B: Methodological}, 1994.

\bibitem{hearn1984convex}
D.~W. Hearn, S.~Lawphongpanich, and S.~Nguyen, ``Convex programming
  formulations of the asymmetric traffic assignment problem,'' {\em
  Transportation Res. Part B: Methodological}, 1984.

\bibitem{dafermos1972multiclass_user}
S.~C. Dafermos, ``The traffic assignment problem for multiclass-user
  transportation networks,'' {\em Transportation science}, vol.~6, no.~1,
  pp.~73--87, 1972.

\bibitem{florian1995network}
M.~Florian and D.~Hearn, ``Network equilibrium models and algorithms,'' {\em
  Handbooks in Operations Research and Management Science}, vol.~8,
  pp.~485--550, 1995.

\bibitem{dafermos1980variationalinequalities}
S.~Dafermos, ``Traffic equilibrium and variational inequalities,'' {\em
  Transportation science}, vol.~14, no.~1, pp.~42--54, 1980.

\bibitem{roughgarden2004stackelberg}
T.~Roughgarden, ``Stackelberg scheduling strategies,'' {\em SIAM Journal on
  Computing}, vol.~33, no.~2, pp.~332--350, 2004.

\bibitem{bonifaci2010stackelberg}
V.~Bonifaci, T.~Harks, and G.~Sch{\"a}fer, ``Stackelberg routing in arbitrary
  networks,'' {\em Mathematics of Operations Research}, vol.~35, no.~2,
  pp.~330--346, 2010.

\bibitem{swamy2012effectiveness}
C.~Swamy, ``The effectiveness of stackelberg strategies and tolls for network
  congestion games,'' {\em ACM Transactions on Algorithms (TALG)}, 2012.

\bibitem{roughgarden2002bad}
T.~Roughgarden and {\'E}.~Tardos, ``How bad is selfish routing?,'' {\em Journal
  of the ACM (JACM)}, vol.~49, no.~2, pp.~236--259, 2002.

\bibitem{correa2008geometric}
J.~R. Correa, A.~S. Schulz, and N.~E. Stier-Moses, ``A geometric approach to
  the price of anarchy in nonatomic congestion games,'' {\em Games Econ.
  Behavior}, 2008.

\bibitem{perakis2007price}
G.~Perakis, ``The “price of anarchy” under nonlinear and asymmetric
  costs,'' {\em Mathematics of Operations Research}, vol.~32, no.~3,
  pp.~614--628, 2007.

\bibitem{lazar2018poa}
D.~Lazar, S.~Coogan, and R.~Pedarsani, ``The price of anarchy for
  transportation networks with mixed autonomy,'' in {\em American Control
  Conference}, IEEE, 2018.

\bibitem{daganzo1997continuum}
C.~F. Daganzo, ``A continuum theory of traffic dynamics for freeways with
  special lanes,'' {\em Transportation Res. Part B: Methodological}, 1997.

\bibitem{zhang2002kinematic}
H.~Zhang and W.~Jin, ``Kinematic wave traffic flow model for mixed traffic,''
  {\em Transportation Research Record: Journal of the Transportation Research
  Board}, 2002.

\bibitem{logghe2003heterogeneous}
S.~Logghe and L.~Immers, ``Heterogeneous traffic flow modelling with the lwr
  model using passenger-car equivalents,'' in {\em 10th World congress on ITS},
  2003.

\bibitem{fan2015heterogeneous}
S.~Fan and D.~B. Work, ``A heterogeneous multiclass traffic flow model with
  creeping,'' {\em SIAM Journal on Applied Mathematics}, vol.~75, no.~2,
  pp.~813--835, 2015.

\bibitem{krichene2017stackelberg}
W.~Krichene, J.~D. Reilly, S.~Amin, and A.~M. Bayen, ``Stackelberg routing on
  parallel transportation networks,'' {\em Handbook of Dynamic Game Theory},
  2017.

\bibitem{nisan2007algorithmic}
N.~Nisan, T.~Roughgarden, E.~Tardos, and V.~V. Vazirani, {\em Algorithmic game
  theory}.
\newblock Cambridge University Press, 2007.

\bibitem{gonzaga1992path}
C.~C. Gonzaga, ``Path-following methods for linear programming,'' {\em SIAM
  review}, vol.~34, no.~2, pp.~167--224, 1992.

\bibitem{krajzewicz2012recent}
D.~Krajzewicz, J.~Erdmann, M.~Behrisch, and L.~Bieker, ``Recent development and
  applications of sumo-simulation of urban mobility,'' 2012.

\bibitem{krauss1997metastable}
S.~Krau{\ss}, P.~Wagner, and C.~Gawron, ``Metastable states in a microscopic
  model of traffic flow,'' {\em Physical Review E}, vol.~55, no.~5, p.~5597,
  1997.

\bibitem{cvx}
M.~Grant and S.~Boyd, ``{CVX}: Matlab software for disciplined convex
  programming, version 2.1.'' \url{http://cvxr.com/cvx}, Mar. 2014.

\bibitem{gb08}
M.~Grant and S.~Boyd, ``Graph implementations for nonsmooth convex programs,''
  in {\em Recent Advances in Learning and Control}, 2008.

\bibitem{treiber2000congested}
M.~Treiber, A.~Hennecke, and D.~Helbing, ``Congested traffic states in
  empirical observations and microscopic simulations,'' {\em Physical review
  E}, vol.~62, no.~2, 2000.

\end{thebibliography}
\end{document}